\renewcommand{\theenumi}{{\upshape{(\roman{enumi})}}}
\let\itemref\ref
\newtheoremstyle{dtheoremnopar}{3 mm}{1 mm}{\itshape}{}{\bfseries}{.}{ }
{\thmname{#1}\thmnumber{ #2}\thmnote{ \mdseries(#3)\bfseries}}
\theoremstyle{dtheoremnopar}
\newcounter{theoremx}
\newtheorem{conjecturealpha}[theoremx]{Conjecture}
\newcommand{\tref}[1]{\ref{#1}} 
\newcommand{\pref}[1]{\eqref{#1}}
\newcommand\Z{\mathbb{Z}}
\newcommand\Q{\mathbb{Q}}
\newcommand\inj{\hookrightarrow}
\newcommand\surj{\twoheadrightarrow}
\newcommand\map[3]{#1\colon #2\rightarrow #3}
\DeclareMathOperator{\rank}{rk}
\DeclareMathOperator{\image}{im}
\newcommand\sE{\mathcal{E}}
\newcommand\sF{\mathcal{F}}
\newcommand\sL{\mathcal{L}}
\newcommand\sG{\mathcal{G}}
\newcommand\sO{\mathcal{O}}
\DeclareMathOperator{\Ass}{Ass} 
\renewcommand\P{\mathbb{P}}           
\DeclareMathOperator{\Spec}{Spec}
\newcommand\red{\mathrm{red}}   
\newcommand\GL{\mathrm{GL}}
\newcommand{\etale}{\'{e}tale}
\newcommand\QCoh{\mathbf{QCoh}} 
\newcommand{\Irr}{\mathrm{Irr}}
\newcommand{\devissage}{d\'evissage}
\newcommand{\loccit}{\emph{loc.\ cit.}}
\newcommand{\spref}[1]{\href{http://stacks.math.columbia.edu/tag/#1}{#1}}
\begin{document}

\title[Approximation of sheaves]{Approximation of sheaves on algebraic stacks}
\author{David Rydh}
\address{KTH Royal Institute of Technology\\Department of 
  Mathematics\\SE-100 44 Stockholm\\Sweden}
\email{dary@math.kth.se}
\date{2015-05-07}
\thanks{Supported by the Swedish Research Council grant no 2011-5599.}
\subjclass[2010]{Primary 14A20}
\keywords{Noetherian approximation, pure, projective, algebraic stacks}

\begin{abstract}
Raynaud--Gruson characterized flat and pure morphisms between
affine schemes in terms of projective modules. We give a similar
characterization for non-affine morphisms. As an application,
we show that every quasi-coherent sheaf is the union of its finitely
generated quasi-coherent subsheaves on any quasi-compact and quasi-separated
algebraic stack.
\end{abstract}

\maketitle


\begin{section}{Introduction}
It is well-known that on a noetherian scheme every quasi-coherent sheaf is the
union of its coherent subsheaves~\cite[Cor.~9.4.9]{egaI}. This is also
true for noetherian algebraic stacks~\cite[Prop.~15.4]{laumon}.

For a non-noetherian scheme or algebraic stack $X$, this question splits up
into two questions.
\begin{enumerate}
\item Is every quasi-coherent $\sO_X$-module the union of its
  quasi-coherent submodules of finite type?
\item Is every quasi-coherent $\sO_X$-module a directed colimit of finitely
  presented $\sO_X$-modules?
\end{enumerate}
When these questions have positive answers, we say that $X$ has the
\emph{partial completeness property} and \emph{completeness property}
respectively. The second property implies the first (take images).

It is known that quasi-compact and quasi-separated schemes have the
completeness property~\cite[\S6.9]{egaI_NE}.
In~\cite[Thm.~A]{rydh_noetherian-approx} it was shown
that many stacks, including quasi-compact and quasi-separated algebraic spaces
and Deligne--Mumford stacks, have the completeness property. With current
technology, this result only applies to relatively few algebraic stacks with
infinite stabilizer groups.

The main result of this paper settles the partial completeness property for
every reasonable stack.
\begin{theorem*}
Let $X$ be a quasi-compact and quasi-separated algebraic stack. Then
every quasi-coherent $\sO_X$-module is the union of its quasi-coherent
submodules of finite type.
\end{theorem*}

An important application of the theorem is that when $X$ in addition
has affine stabilizer groups, then there exists a finitely
presented filtration of $X$ with strata that are global quotient
stacks~\cite[Prop.~2.6~(i)]{hall-rydh_alg-groups-classifying}. This is used to
obtain a criterion for an algebraic stack to have finite cohomological
dimension~\cite[Thm.~2.1]{hall-rydh_alg-groups-classifying} and to extend
Tannaka duality to non-noetherian stacks~\cite[Thm.~1.4]{hall-rydh_coherent-tannaka-duality}.

The key idea in the proof of the main theorem
is to use projective modules instead of flat modules. The main
lemma~\pref{L:affine-projective-submodule-descent} on existence of minimal
modules goes back to Serre~\cite[Exp.~VIB, 11.8, 11.10.1]{SGA3} in the context
of coalgebras and comodules. Here projectivity cannot be replaced with
flatness. The bulk of the paper extends this result to non-affine
\emph{pure} morphisms (Theorem~\ref{T:existence-of-minimal}). For this,
we use a new characterization of pure morphisms between stacks in terms
of projectivity (Theorem~\ref{T:pure=hom-proj}). This generalizes the
characterization of affine pure morphisms due to
Raynaud--Gruson~\cite[Thm.~I.3.3.5]{raynaud-gruson}.

The main result naturally leads to the following conjectures.
\begin{conjecturealpha}\label{CONJ:completeness}
  If $X$ is a quasi-compact and quasi-separated algebraic stack,
  then $X$ has the completeness property.
\end{conjecturealpha}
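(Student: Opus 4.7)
The plan is to leverage the main theorem (partial completeness) to reduce the conjecture to the following claim: every finitely generated quasi-coherent $\sO_X$-module is a directed colimit of finitely presented ones. Granting this, if $\sF$ is an arbitrary quasi-coherent $\sO_X$-module then by the main theorem it is the filtered union of its finitely generated subsheaves $\sG_i$, each $\sG_i$ is a filtered colimit of finitely presented $\mathcal{H}_{ij}$, and standard cofinality arguments splice these into a single directed colimit expressing $\sF$ as a filtered colimit of finitely presented sheaves.

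For a single finitely generated $\sG$, the scheme-theoretic strategy is to pick a surjection $\sO_X^n \surj \sG$ with kernel $\sK$, apply partial completeness to write $\sK$ as the filtered union of finitely generated $\sK_\alpha$, and observe that the quotients $\sO_X^n/\sK_\alpha$ are finitely presented and filter up to $\sG$. The obstruction on a general algebraic stack is that a finitely generated quasi-coherent sheaf need not be a quotient of $\sO_X^n$; already on $B\GL_n$ one is forced to allow vector bundles of various ranks as generators. A natural remedy is first to realize $\sG$ as a quotient of a (possibly infinite) direct sum of finitely presented locally free sheaves, say by refining the proof of the main theorem, and then to apply the above kernel-filtration argument.

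The most promising structural route exploits the main technical tools of the paper: Theorem~\ref{T:pure=hom-proj} (pure equals hom-projective) and Theorem~\ref{T:existence-of-minimal} (existence of minimal modules along pure morphisms). Given a finitely generated $\sG$, one would like to construct a pure morphism $f\colon Y\to X$ together with a projective $f^*\sG$-cover over $Y$, and then use descent of minimal sub-objects to produce, on $X$, a filtered family of finitely presented quotients rather than merely of finitely generated subsheaves. Alternatively, a noetherian approximation of $X$ itself would settle the conjecture at once, since on a noetherian stack every quasi-coherent sheaf is a filtered union of coherent (hence finitely presented) subsheaves~\cite{laumon}.

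The main obstacle is precisely the gap between these two approaches for stacks with non-affine or non-finite stabilizers: noetherian approximation of such stacks is itself open (the affirmative results of \cite{rydh_noetherian-approx} being confined to algebraic spaces and Deligne--Mumford stacks), and the present form of the minimal-module construction controls only generators, not relations. Bridging this gap appears to require a \emph{finitely presented} refinement of Theorem~\ref{T:existence-of-minimal}, in which the descended minimal sub-object is simultaneously controlled on generators and on relations. Producing such a refinement seems to demand genuinely new input beyond the projectivity techniques developed here, which is why the present paper settles only the partial completeness property.
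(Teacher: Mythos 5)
This statement is Conjecture~\tref{CONJ:completeness}, which the paper explicitly leaves open: no proof is given, and the only thing the paper offers is Remark~\ref{R:conjectures}, which observes that the \'{e}tale-d\'evissage and pure-descent machinery (Propositions~\ref{P:semi-noeth:pure} and~\ref{P:semi-noeth:et} together with \cite[Prop.~4.11, Lem.~7.9]{rydh_noetherian-approx}) reduces the conjecture to the case where $X$ admits a smooth \emph{pure} presentation $U\to X$. Your proposal, to its credit, does not pretend to close the gap: it is a survey of strategies ending in the correct admission that the needed ``finitely presented refinement'' of Theorem~\ref{T:existence-of-minimal} is not available. So there is no error to point out, but there is also no proof --- which is the correct state of affairs for this statement.

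On the substance of your reductions: the first one (partial completeness plus ``every finitely generated quasi-coherent sheaf is a filtered colimit of finitely presented ones'' implies completeness) is essentially the relation between conditions (C1), (C2) and (C3) of \cite[\S4]{rydh_noetherian-approx}; the splicing step needs the relative form --- for $\sG\subseteq\sF$ finitely generated one wants a finitely presented $\mathcal{H}$ with a map $\mathcal{H}\to\sF$ whose image contains $\sG$ --- but this is standard. Your diagnosis of the obstruction is also the right one: Lemma~\ref{L:affine-projective-submodule-descent} and Theorem~\ref{T:existence-of-minimal} produce a minimal submodule $\sF_0$ that is of finite type when $\sG_0$ is, but nothing in the construction bounds the relations of $\sF_0$, and a finitely generated quasi-coherent sheaf on a general stack need not be a quotient of a finitely presented one by any argument currently available. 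Where your discussion diverges from the paper's own suggestion is in emphasis: the paper's Remark~\ref{R:conjectures} reduces the \emph{geometry} (to a smooth pure presentation), whereas you propose to refine the \emph{algebra} (a ``finitely presented'' version of the minimal-submodule lemma). Both are plausible attack routes; neither is carried out here, and the conjecture remains open.
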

\begin{conjecturealpha}\label{CONJ:approximation}
  If $X$ is a quasi-compact and quasi-separated algebraic stack,
  then $X$ has an approximation, that is, there exists a factorization
  $X\to X_0\to \Spec \Z$ where $X\to X_0$ is affine and $X_0$ is of
  finite presentation over $\Spec \Z$.
\end{conjecturealpha}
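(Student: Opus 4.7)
The plan is to realize $X$ as a cofiltered inverse limit $X = \varprojlim_\lambda X_\lambda$ in which each $X_\lambda$ is of finite presentation over $\Spec \Z$ and each transition morphism $X_\mu \to X_\lambda$ is affine; any sufficiently advanced $X_\lambda$ then serves as the desired $X_0$, so the task reduces to constructing such an affine pro-presentation. This mirrors the classical Noetherian approximation of qcqs schemes (EGA IV.8) and its extension to qcqs algebraic spaces and Deligne--Mumford stacks in the author's earlier paper on Noetherian approximation, where the analogous presentation was built from a smooth atlas combined with an approximation of quasi-coherent data.

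Concretely, I would first choose a smooth presentation $p\colon U \to X$ with $U = \Spec A$ affine (available because $X$ is quasi-compact) and form the groupoid $R = U\times_X U \rightrightarrows U$ whose quotient stack recovers $X$. Writing $A = \varinjlim_i A_i$ as the filtered union of its finitely generated $\Z$-subalgebras expresses $U$ as an affine limit of finite type $\Z$-schemes $U_i$. The algebraic space $R$ is qcqs, so by the already-known approximation for algebraic spaces it is likewise an affine limit of finite type schemes $R_i$. Standard spreading-out arguments of EGA IV.8 type should then descend the source, target, unit, inverse, and composition of $R \rightrightarrows U$ to morphisms $R_i \rightrightarrows U_i$ satisfying the groupoid axioms for $i$ sufficiently large---and it is here that the partial completeness theorem of this paper is used, to produce the finite type quasi-coherent ideals and algebras required to express those structural morphisms at finite level. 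Taking $X_0 = [U_i / R_i]$ and checking via the 2-cartesian square $U \cong U_i \times_{X_0} X$ together with faithfully flat descent of affineness along the smooth atlas $U_i \to X_0$ would then exhibit $X \to X_0$ as affine with $X_0$ of finite presentation over $\Spec \Z$.

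The principal obstacle, and the reason the statement appears here as a conjecture rather than a theorem, lies in carrying out this groupoid descent while keeping the source and target $s,t\colon R_i \to U_i$ smooth of fixed relative dimension. Even given good individual approximations of $U$ and $R$, the inertia group schemes---the fibers $s^{-1}(x) \cap t^{-1}(x)$---must themselves descend to approximable group schemes over $U_i$, and for qcqs stacks with non-affine or otherwise pathological stabilizers no general technique for doing so is presently known. The partial completeness property proved in this paper controls quasi-coherent data perfectly, but the group-theoretic component of the inertia lies beyond its reach; any resolution of Conjecture~\tref{CONJ:approximation} by this route will likely require combining the results here with additional hypotheses on the stabilizer groups, in the spirit of the companion work on classifying stacks.
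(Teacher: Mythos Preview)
This statement is Conjecture~\tref{CONJ:approximation} in the paper and is \emph{not} proved there; consequently there is no proof in the paper against which to compare your proposal. You evidently recognize this, since your text is not a proof but a strategy sketch that ends by identifying the obstruction (descending the smooth groupoid structure and the inertia) and explaining why the statement remains conjectural.

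It is still worth contrasting your outline with what the paper does say. In Remark~\ref{R:conjectures} the author indicates a different partial reduction: rather than approximating the groupoid $R\rightrightarrows U$ directly and attempting to spread out the source, target, and composition maps, one argues as in the proof of Theorem~\ref{T:smooth-descent-of-semi-noetherian}, using the connected (or irreducible component) fibration of a smooth presentation together with~\cite[Prop.~4.11 and Lem.~7.9]{rydh_noetherian-approx}. This reduces Conjecture~\tref{CONJ:approximation} to the special case in which $X$ admits a smooth \emph{pure} presentation $U\to X$ with $U$ affine. The purity hypothesis is precisely what brings the projectivity and minimal-subsheaf machinery of the paper into play, whereas your groupoid-approximation route does not single out purity and instead runs into the stabilizer-descent problem you describe. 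Neither approach is currently known to finish the argument, but the paper's reduction is sharper in that it isolates a concrete structural hypothesis (a pure atlas) rather than leaving the full groupoid descent open.
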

The second conjecture implies the first conjecture. Our proof of the main
theorem first reduces the question to the case when there is a pure
presentation. The conjectures can also be reduced to this seemingly simpler
situation
(see Remark~\ref{R:conjectures}).

In Sections~\ref{S:proj}--\ref{S:pure}, we recall and extend some notions from
schemes to algebraic stacks. This includes (1) locally free and locally
projective modules, (2) assassins and schematically dominant morphisms, and (3)
pure morphisms.
In Section~\ref{S:hom-proj}, we give a characterization
of pure morphisms in terms of projectivity (Theorem~\ref{T:pure=hom-proj}).
In Section~\ref{S:minimal}, we prove the existence of minimal subsheaves
for pure morphisms. In Section~\ref{S:approx}, we prove the
main theorem. In the last section, we give some applications to the
main theorem.

We follow the terminology of~\cite{stacks-project} and do not impose any
separation conditions on a general algebraic stack. An algebraic stack is
\emph{quasi-separated} if its diagonal is quasi-compact and quasi-separated,
that is, if the diagonal and the double diagonal are quasi-compact.

\begin{subsection}{Acknowledgments}
It is my pleasure to acknowledge useful discussions with Jack Hall and
useful comments from Martin Brandenburg and Matthieu Romagny.
I would also like to express my gratitude to the referees for their many useful
suggestions and corrections that improved the paper.
\end{subsection}
\end{section}


\begin{section}{Locally free and locally projective modules}\label{S:proj}
In this section, we recall some standard results on infinitely generated
projective modules due to Kaplansky, Bass and Raynaud--Gruson.

\begin{definition}
Let $X$ be an algebraic stack. We say that a quasi-coherent sheaf $\sF$ is
\emph{locally free} (resp.\ \emph{locally projective}) if there exists a
jointly surjective family of flat morphisms $\map{p_i}{\Spec A_i}{X}$, locally
of finite presentation, such that $p_i^*\sF$ is free (resp.\ projective) for
every $i$.
\end{definition}

We do not require that $p_i^*\sF$ has finite rank in the definition of
locally free.
Note that the properties locally free and locally projective are stable under
arbitrary pull-back and are local for
the fppf-topology. We have the implications: locally free $\implies$ locally
projective $\implies$ flat.

If $x\in |X|$ is a point, then we define the rank $\rank_\sF(x)$ of $\sF$
at $x$ as the
cardinality of a basis of the $k$-vector space $\varphi^*\sF$ for any
representative
$\map{\varphi}{\Spec k}{X}$ of $x$. Since flat morphisms that are
locally of finite presentation are open, the rank of $\sF$ is locally constant
on $|X|$ if $\sF$ is locally free.

The rank does not behave so well for flat modules that are not finitely
generated. If $A=\Z$ and $M=\Q$, then the rank of $M$ is not upper
semicontinuous. The rank of projective modules is more well-behaved.

\begin{lemma}[{Kaplansky~\cite{kaplansky_projective-modules}}]\label{L:kaplansky-local-ring}
If $A$ is a local ring, then every projective $A$-module is free.
\end{lemma}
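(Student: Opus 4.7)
The plan is to prove this classical result of Kaplansky in two stages. \emph{Stage 1: Decomposition into countably generated summands.} For an arbitrary projective $A$-module $P$, write $P\oplus Q=F=\bigoplus_{i\in I}Ae_i$ realizing $P$ as a direct summand of a free module. Call $J\subseteq I$ \emph{good} if $F_J:=\bigoplus_{i\in J}Ae_i$ decomposes as $F_J=(F_J\cap P)\oplus(F_J\cap Q)$. Two observations drive the argument: arbitrary unions of good subsets are good, and any finite subset of $I$ is contained in a \emph{countable} good subset, because each $e_i$ has finite support when written via the $P,Q$-projections, so one can close up by a countable back-and-forth. A transfinite induction then produces a continuous well-ordered chain $\emptyset=J_0\subset J_1\subset\cdots$ exhausting $I$ with countable successive differences, and the resulting subquotients of $P$ give a direct sum decomposition of $P$ into countably generated projective summands.

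\emph{Stage 2: Countably generated projectives over a local ring are free.} Assume $A$ is local and $P$ is countably generated projective, with $P\oplus Q=F$ free and generators $y_1,y_2,\ldots$ of $P$. The key lemma is that for any projective module $P$ over a local ring and any $y\in P$, there exists a finitely generated direct summand of $P$ containing $y$; this summand, being finitely generated projective over a local ring, is free by Nakayama. Applying this lemma iteratively (first to $y_1$ inside $P$; then, writing $P=P_1\oplus R_1$, to the $R_1$-component of $y_2$ inside the projective module $R_1$; and so on) produces an ascending chain $0=P_0\subset P_1\subset P_2\subset\cdots$ of finitely generated free direct summands of $P$ with $y_n\in P_n$ and each inclusion split. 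Since the $y_n$ generate $P=\bigcup_n P_n$, and assembling bases of the finitely generated free complements $C_n$ (where $P_{n+1}=P_n\oplus C_n$) exhibits $P$ as a free module. Combining the two stages, an arbitrary projective $A$-module is a direct sum of countably generated projectives, each of which is free, hence free.

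The main obstacle is Stage 1: the Zorn's lemma / transfinite induction to extract countable good subsets is delicate, in particular the claim that every finite subset of $I$ embeds in a countable good subset, which requires the careful back-and-forth closure procedure. Stage 2 is comparatively clean because Nakayama makes finitely generated projectives over a local ring automatically free, so once the enlargement lemma is established, the construction of $P$ as a union of finitely generated free summands is essentially algorithmic.
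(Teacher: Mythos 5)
The paper gives no proof of this lemma at all: it is quoted verbatim from Kaplansky's 1958 paper, with only the citation supplied. Your two-stage argument is precisely Kaplansky's original one from that source --- Stage 1 is his theorem that any direct summand of a direct sum of countably generated modules (in particular any projective module) decomposes into countably generated summands, via the transfinite chain of ``good'' subsets, and Stage 2 is his treatment of the countably generated local case --- and the outline is correct, including the observations that unions of good subsets are good and that finite sets close up countably. The one point worth flagging is that the enlargement lemma of Stage 2 (every element of a projective module over a local ring lies in a finitely generated free direct summand) is the genuine technical heart of the local case and is asserted rather than proved; its proof requires writing $y=a_1e_1+\cdots+a_ne_n$ with $n$ minimal among all free presentations $P\oplus Q=F$ and all bases, deducing that no $a_j$ lies in the ideal generated by the remaining $a_i$, and concluding via the fact that a square matrix over a local ring that is congruent to the identity modulo the maximal ideal is invertible, so that $p(e_1),\dots,p(e_n)$ extend to a basis of $F$. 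Since you identify this lemma explicitly and the rest of the construction (the split chain $P_0\subset P_1\subset\cdots$ exhausting $P$ with free complements) is routine once it is granted, the proposal is a faithful and essentially complete reconstruction of the cited proof.
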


Thus, if $X$ is a quasi-separated\footnote{This condition is necessary with
the naive notion of irreducible components, cf.\ Example~\pref{E:disconnected}.}
algebraic stack and $\sF$ is a locally
projective
$\sO_X$-module, then
\begin{enumerate}
\item the rank of $\sF$ is constant on irreducible components of $X$; and
\item if $X$ has a finite number of irreducible components
  (e.g., $X$ noetherian), then the rank is
  locally constant.
\end{enumerate}
Nevertheless, even if $M$ is projective and has finite rank at every point,
the rank need not
be locally constant. Bass gives an example, due to Kaplansky, of a projective
module of rank $\leq 1$ such that the locus where the module has rank~$0$ is
closed but not open~\cite[p.~31, (2)]{bass_big-projective}. We now give a
similar example.
\begin{example}\label{E:disconnected}
Let $k$ be an algebraically closed field and let $A=T(k[x])$ be the absolutely
flat ring associated to the polynomial ring
$k[x]$~\cite[Prop.~5]{olivier_sem_samuel}. Then $\Spec A$ is zero-dimensional
and reduced and its underlying topological space is the one-point
compactification of $k$ with its discrete topology. For every $\lambda \in k$,
the corresponding quotient $A\surj \kappa(\lambda)=k$ is a locally free and
finitely generated $A$-module, hence projective. The direct sum
$M=\oplus_{\lambda\in k} \kappa(\lambda)$ is a projective $A$-module with rank
$1$ over the open subset $k$ and rank $0$ over its complement, which consists of
a single point $\xi$.

The discrete additive group $G=(k,+)$ acts freely on $\Spec A$ and the quotient
$X=\Spec A/G$ is an algebraic space consisting of two points $\{x,\xi\}$ where
$x$ is open and $\xi$ is closed. Note that $X$ is not quasi-separated since the
orbit of $x$ is not quasi-compact. The module $M$ descends to a locally
projective $\sO_X$-module $\sF$ such that the rank over $x$ is one and the rank
over $\xi$ is zero. The topological space $|X|$ is irreducible and hence the
rank is not constant over irreducible components in the usual sense.
\end{example}

A flat module
that has constant rank need not be so nice either as the following example
shows.

\begin{example}
If $M\subseteq \Q$ is the $\Z$-submodule generated
by all $p^{-1}$, for prime numbers $p$, then $M$ is flat of constant rank $1$
but neither projective nor finitely generated.
\end{example}

\begin{proposition}\label{P:loc-proj-vs-loc-free}
Let $X$ be an algebraic stack and let $\sF$ be a quasi-coherent sheaf on $X$.
\begin{enumerate}
\item\label{PI:affine:proj=loc-proj}
  If $X$ is an affine scheme, then $\sF$ is locally projective if and only
  if $\sF$ is projective.
\item\label{PI:affine:proj=free:noeth}
  If $X$ is a noetherian affine scheme and $\aleph\geq \aleph_0$ is an infinite
  cardinal, then $\sF$ is projective with constant rank $\aleph$ if and
  only if $\sF$ is free of rank $\aleph$.
\item\label{PI:proj+finite-rank=vector-bundle:noeth}
  If $X$ is noetherian, then $\sF$ is locally projective of finite rank if and
  only if $\sF$ is finitely generated and locally free.
\item\label{PI:loc-free=loc-proj:noeth}
  If $X$ is noetherian, then $\sF$ is locally projective if and only if $\sF$ is
  locally free.
\item\label{PI:loc-free=Zar-loc-free:noeth-scheme}
  If $X$ is a noetherian scheme, then $\sF$ is locally free if and only if $\sF$
is Zariski-locally free.
\end{enumerate}
\end{proposition}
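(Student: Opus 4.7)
The plan is to derive everything from two commutative algebra inputs: Raynaud--Gruson's descent of projectivity along faithfully flat morphisms of finite presentation between affine schemes, and Bass's theorem that every non-finitely-generated projective module over a connected noetherian ring is free. The remaining work is fppf/Zariski bookkeeping over the finitely many (open) connected components of a noetherian affine spectrum.

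Part \itemref{PI:affine:proj=loc-proj} is a direct invocation of Raynaud--Gruson (one direction uses the identity cover). For part \itemref{PI:affine:proj=free:noeth}, I would decompose $\Spec A$ into its finitely many connected components $\Spec A_j$ and write $M=\bigoplus_j M_j$; each $M_j$ is projective of constant rank $\aleph\geq\aleph_0$, hence is not finitely generated, so Bass gives $M_j\cong A_j^{\oplus\aleph}$, and summing yields $M\cong A^{\oplus\aleph}$. The converse is immediate as free modules are projective.

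For parts \itemref{PI:proj+finite-rank=vector-bundle:noeth}--\itemref{PI:loc-free=Zar-loc-free:noeth-scheme}, I reduce to the affine case: choose an fppf atlas $\map{p}{\Spec A}{X}$ with $A$ noetherian (or, in case \itemref{PI:loc-free=Zar-loc-free:noeth-scheme}, an affine Zariski open of the scheme $X$), and set $M=p^*\sF$. By \itemref{PI:affine:proj=loc-proj}, $M$ is projective over $A$. Decompose $\Spec A$ into its finitely many open connected components $\Spec A_j$; on each there is a dichotomy: either $M_j$ is finitely generated, hence a finitely generated projective module over a noetherian ring and therefore Zariski-locally free, or $M_j$ is not finitely generated, in which case Bass's theorem makes it free. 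Either way $M$ is Zariski-locally free on $\Spec A$, which supplies the ``locally free'' conclusion of \itemref{PI:loc-free=loc-proj:noeth} and \itemref{PI:loc-free=Zar-loc-free:noeth-scheme}. Under the finite-rank hypothesis of \itemref{PI:proj+finite-rank=vector-bundle:noeth}, the second alternative is excluded on every component (a free summand of infinite rank would violate finite rank), so each $M_j$ is finitely generated; by fppf descent $\sF$ itself is of finite type, completing ``finitely generated and locally free''. The remaining converses are straightforward.

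The main obstacle is the citation of Bass's theorem: it is precisely the dichotomy ``non-finitely-generated projective over a connected noetherian ring equals free of infinite rank'' that powers the component-by-component analysis in \itemref{PI:proj+finite-rank=vector-bundle:noeth}--\itemref{PI:loc-free=Zar-loc-free:noeth-scheme}. The descent step in \itemref{PI:affine:proj=loc-proj} is likewise a non-trivial input of Raynaud--Gruson, though standard in this context.
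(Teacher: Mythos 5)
Your proposal is correct and follows essentially the same route as the paper: part \itemref{PI:affine:proj=loc-proj} is the Raynaud--Gruson descent of projectivity, parts \itemref{PI:affine:proj=free:noeth} and \itemref{PI:proj+finite-rank=vector-bundle:noeth} are Bass's results on big projective modules (which you re-derive from the ``non-finitely-generated projective over a connected noetherian ring is free'' dichotomy rather than citing the corollaries directly), and parts \itemref{PI:loc-free=loc-proj:noeth}--\itemref{PI:loc-free=Zar-loc-free:noeth-scheme} reduce to the affine case using that the rank is locally constant in the noetherian setting. Your component-by-component bookkeeping on the atlas is just a slight repackaging of the paper's appeal to local constancy of the rank, so there is no substantive difference.
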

\begin{proof}
In each case, the ``if'' part is trivial.
The necessity of the first condition
follows from~\cite[I.3.1.4]{raynaud-gruson} (countable rank)
or~\cite[II.2.5.1 and II.3.1.3]{raynaud-gruson} (general case).
That conditions~\itemref{PI:affine:proj=free:noeth}
and~\itemref{PI:proj+finite-rank=vector-bundle:noeth} are necessary
is~\cite[Cor.~3.2 \& Prop.~4.2]{bass_big-projective} respectively.
Since the rank of a locally projective sheaf is locally constant on a
noetherian stack, the necessity of conditions~\itemref{PI:loc-free=loc-proj:noeth}
and~\itemref{PI:loc-free=Zar-loc-free:noeth-scheme} follow from
\itemref{PI:affine:proj=loc-proj}, \itemref{PI:affine:proj=free:noeth}
and~\itemref{PI:proj+finite-rank=vector-bundle:noeth}.
\end{proof}

\begin{remark}
Without the noetherian assumptions,
statements~\itemref{PI:proj+finite-rank=vector-bundle:noeth}
and~\itemref{PI:loc-free=loc-proj:noeth} are false. If
statement~\itemref{PI:affine:proj=free:noeth} holds without the noetherian
assumption, then so does~\itemref{PI:loc-free=Zar-loc-free:noeth-scheme}. In
particular, this would imply that on any stack $X$, a quasi-coherent sheaf
$\sF$ is locally free if and only if
$\sF$ is locally projective, has locally constant rank and
is finitely generated over the open locus of finite rank.
\end{remark}

\end{section}


\begin{section}{Relative assassins and relative faithfulness}
In this section, we extend the notions of relative
assassins~\cite[3.2.2]{raynaud-gruson} and schematically dominant
morphisms~\cite[11.9--11.10]{egaIV} from schemes to algebraic stacks.

\begin{xpar}[Associated points]\label{X:ass}
There is a unique notion of associated points of coherent
sheaves on locally noetherian algebraic stacks such that
\begin{enumerate}
\item it coincides with the usual one for schemes; and
\item\label{XI:flat}
  if $\map{f}{X}{Y}$ is a flat morphism between locally noetherian
  stacks and $\sF$ is a coherent $\sO_Y$-module, then
  $f(\Ass_X(f^*\sF))\subseteq \Ass_Y(\sF)$ with equality if $f$ is
  surjective.
\end{enumerate}
The usual assassin satisfies (ii) for morphisms between schemes.
Indeed, more precisely we have that
\begin{equation}\label{E:Ass}
\Ass_X(f^*\sF)=\bigcup_{\mathclap{y\in \Ass_Y(\sF)}} \Ass_{X_y}(\sO_{X_y})
\end{equation}
for any flat morphism $\map{f}{X}{Y}$ between
locally noetherian schemes~\cite[Prop.~3.3.1]{egaIV}.
We may thus simply
define $\Ass_{X}(\sF)$ for a coherent sheaf $\sF$ on $X$
as $\Ass_{X}(\sF):=p(\Ass_U(p^*\sF))$ where $\map{p}{U}{X}$ is a
presentation. One can also give a more intrinsic definition,
cf.\ \cite[2.2.6.3--2.2.6.7]{lieblich_moduli-of-twisted-sheaves}. We abbreviate
$\Ass(X)=\Ass_X(\sO_X)$.

In particular, if $\map{f}{X}{Y}$ is locally of finite type and $\xi\in |Y|$ is
a point, then we may define $\Ass(X_\xi)\subseteq |f|^{-1}(\xi)$ as the image
of $\Ass(X_y)\to |X|$ for any representative $\map{y}{\Spec k}{Y}$ of $\xi$.
\end{xpar}

\begin{definition}[{\cite[D\'ef.~3.2.2]{raynaud-gruson}}]
Let $\map{f}{X}{Y}$ be a morphism of algebraic stacks that is locally of
finite type. The \emph{relative assassin} $\Ass(X/Y)$ is the subset
$\bigcup_{y\in |Y|} \Ass(X_y)$ of $|X|$.
\end{definition}

Note that $X$ and $Y$ need not be noetherian in the definition above, but
the finiteness condition ensures that the fibers are locally noetherian.
If $f$ is flat and $X$ and $Y$ are locally noetherian, then
$\Ass(X)=\bigcup_{y\in \Ass(Y)} \Ass(X_y) \subseteq \Ass(X/Y)$ by~\eqref{E:Ass}.
The advantage of $\Ass(X/Y)$ is that it behaves well with respect to any base
change $Y'\to Y$, whereas $\Ass(X)$ does not behave well
with respect to non-flat base change, e.g., passage to a fiber.


If $\map{p}{X'}{X}$ is flat and locally of finite type,
then $p(\Ass(X'/Y))\subseteq \Ass(X/Y)$ with equality if $p$ is
surjective; this follows from property (ii) above.

\begin{definition}
Let $\map{f}{X}{Y}$ be a morphism of algebraic stacks. We say that $f$ is
\emph{schematically dominant} if $\sO_Y\to f_*\sO_X$ is injective as a morphism
of lisse-\etale{} sheaves.
\end{definition}

This agrees with the usual definition for schemes~\cite[11.10.2]{egaIV} since
that notion is stable under base change by flat morphisms that are locally of
finite presentation~\cite[11.10.5 (ii) b)]{egaIV}. It follows that our notion
for algebraic stacks also is stable under base change by flat morphisms that
are locally of finite presentation. When $f$ is quasi-compact, the notion is
stable under arbitrary flat base change~\cite[11.10.5 (ii) a)]{egaIV}.

If
$\map{p}{X'}{X}$ is another morphism and $f\circ p$ is schematically dominant,
then so is $f$. If $f$ and $p$ are schematically dominant, then so is $f\circ
p$. In particular, morphisms that are covering in the fppf topology are
schematically dominant.

\begin{definition}
Let $S$ be an algebraic stack and let $\map{f}{X}{Y}$ be a morphism of
algebraic stacks over $S$. We say that $f$ is $S$-universally schematically
dominant if $\map{f'}{X\times_S S'}{Y\times_S S'}$ is schematically dominant
for every morphism $S'\to S$.
\end{definition}

\begin{proposition}\label{P:rel-faithfully-flat}
Let $S$, $X$ and $Y$ be algebraic stacks and let $\map{f}{X}{Y}$ and $Y\to S$
be flat morphisms that are locally of finite presentation.
The following are equivalent.
\begin{enumerate}
\item The morphism $f$ is $S$-universally schematically dominant.
\item The image $f(X)$ contains the relative assassin $\Ass(Y/S)$.
\end{enumerate}
\end{proposition}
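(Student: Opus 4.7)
The plan is to reduce the statement, via fppf-descent and base change, to the affine-scheme case of Raynaud--Gruson~\cite[Cor.~3.4.7]{raynaud-gruson}, and to use flat base change for associated points to bridge absolute and universal schematic dominance.

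First I would check that both conditions are fppf-local on $S$ and on $Y$: for~(i) this is the fppf-locality of schematic dominance noted after its definition, combined with the universal nature of the condition; for~(ii), the relative assassin and the image of $f$ both pull back along flat, locally finitely presented covers (as noted after the definition of the relative assassin). Choosing smooth presentations, the proof thus reduces to the case $S=\Spec A$, $Y=\Spec C$, $X=\Spec B$ with $A\to C$ and $C\to B$ flat and of finite presentation. For~(i)~$\Rightarrow$~(ii), take $\xi\in\Ass(Y/S)$ and a representative $\map{y}{\Spec k}{S}$ realising $\xi$ as an associated point of the locally noetherian fiber $Y_k$; applying~(i) over $\Spec k$, the base change $f_k$ is schematically dominant, so the task reduces to showing that an injection $A'\hookrightarrow B'$ of a noetherian ring into a flat $A'$-algebra has image containing every $\mathfrak{p}=\Ann_{A'}(a)\in\Ass(A')$. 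This is immediate: by flatness, $B'/\mathfrak{p}B'\cong A'a\otimes_{A'}B'\cong aB'$ is a nonzero submodule of $B'$, so $\mathfrak{p}$ is in the image.

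For~(ii)~$\Rightarrow$~(i) I would first observe that~(ii) is stable under arbitrary base change $S'\to S$: if $\xi'\in\Ass(Y'/S')$ lies over $s'\in|S'|$ mapping to $s\in|S|$, then the faithful flatness of the field extension $k(s)\to k(s')$ places the image of $\xi'$ in $|Y|$ inside $\Ass(Y_s)\subseteq\Ass(Y/S)\subseteq f(X)$, and a lift to $X$ paired with $s'$ yields a point of $X'=X\times_S S'$ over $\xi'$. It therefore suffices to deduce plain schematic dominance $C\hookrightarrow B$ from~(ii). For $f\in\ker(C\to B)$, flatness of $C\to B$ applied to the sequence $0\to\Ann_C(f)\to C\to fC\to 0$ yields $\Ann_C(f)\cdot B=B$. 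If $f\neq 0$, the Raynaud--Gruson theory of relative assassins for finitely presented morphisms~\cite[3.4.6]{raynaud-gruson} provides $\mathfrak{p}\in\Ass(C/A)$ with $\Ann_C(f)\subseteq\mathfrak{p}$; by~(ii), $\mathfrak{p}\in f(X)$, so $\mathfrak{p}B\neq B$, contradicting $\Ann_C(f)\cdot B=B$.

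The hard step is the final one: extracting a relative associated prime in the possibly non-noetherian ring $C$ requires the finite presentation of $C$ over $A$ in an essential way and relies on the full Raynaud--Gruson analysis of relative assassins. Everything else is formal descent and base-change bookkeeping, together with the elementary flat-algebra computation of the (i)~$\Rightarrow$~(ii) direction.
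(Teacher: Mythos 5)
Your proof is correct, and it lands on the same essential input as the paper's, but by a more hands-on route. The paper's argument is a two-line reduction: since $f$ is flat and locally of finite presentation it is open and faithfully flat onto its open image, so both conditions depend only on the open substack $f(X)\subseteq Y$ and one may assume $f$ is an open immersion; after localizing $Y$ and $S$ to affine schemes the equivalence is then literally \cite[Prop.~11.10.10]{egaIV} (equivalently \cite[Cor.~3.2.6]{raynaud-gruson}). You instead keep $X$ in the picture, reduce everything to affine schemes, and re-derive both implications. Your (i)$\Rightarrow$(ii) is an elementary, self-contained flatness computation; note only that to conclude $\mathfrak{p}\in\image(\Spec B'\to \Spec A')$ from $B'/\mathfrak{p}B'\cong aB'\neq 0$ you should add that $B'/\mathfrak{p}B'$ is flat, hence torsion-free, over the domain $A'/\mathfrak{p}$, so it survives localization at the generic point of $V(\mathfrak{p})$ (nonvanishing of $B'/\mathfrak{p}B'$ alone only puts some prime \emph{containing} $\mathfrak{p}$ in the image). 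Your (ii)$\Rightarrow$(i) correctly isolates base-change stability of (ii) and then rests on the claim that the annihilator of a nonzero element of a flat, finitely presented $A$-algebra $C$ is contained in some $\mathfrak{p}\in\Ass(C/A)$; this is equivalent to the $S$-schematic density of open neighbourhoods of the relative assassin, i.e.\ it is exactly the content of the reference the paper invokes, so your citation of [3.4.6]/[Cor.~3.4.7] is misnumbered and should be \cite[Cor.~3.2.6]{raynaud-gruson} or \cite[11.10.9--11.10.10]{egaIV}. In sum, the non-noetherian crux is identical in both proofs; what your version buys is an explicit proof of the easy direction and a clearer view of where finite presentation of $Y\to S$ is genuinely used, at the cost of more descent bookkeeping that the paper avoids by replacing $f$ with the open immersion $f(X)\hookrightarrow Y$.
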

\begin{proof}
Since $f$ is open and faithfully flat onto its image, we may assume that $f$ is
an open immersion. As the question is fppf-local on $Y$ and $S$ we may assume
that $Y$ and $S$ are affine schemes. The result is
then~\cite[Prop.~11.10.10]{egaIV} (or~\cite[Cor.~3.2.6]{raynaud-gruson}).
\end{proof}

\begin{definition}\label{D:rel-faithfully-flat}
Let $\map{f}{X}{Y}$ and $\map{g}{Y}{S}$ be morphisms, locally of finite
presentation, between algebraic stacks such that $g$ is flat. We say that
$f$ is \emph{$S$-faithfully flat} if $f$ is flat and the equivalent
conditions of Proposition~\pref{P:rel-faithfully-flat} hold.
\end{definition}

This terminology is explained
by the following lemma.

\begin{lemma}
Let $\map{f}{X}{Y}$ and $\map{\pi}{Y}{S}$ be morphisms of algebraic stacks.
Assume that $f$ is $S$-universally schematically dominant.
Given $\sF\in\QCoh(Y)$ and $\sG\in\QCoh(S)$, we have that
\begin{enumerate}
\item the unit map $\map{\eta_{\pi^*\sG}}{\pi^*\sG}{f_*f^*\pi^*\sG}$ is
  injective; and
\item a morphism $\map{\theta}{\sF}{\pi^*\sG}$ is zero if and only if
  $f^*\theta$ is zero.
\end{enumerate}
\end{lemma}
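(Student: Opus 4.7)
The plan is to prove (i) by a square-zero-extension (Nagata idealization) trick, applying the universal schematic dominance hypothesis to a well-chosen base change, and then to deduce (ii) from (i) by naturality of the unit.

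For (i), form the quasi-coherent $\sO_S$-algebra $\sO_S\oplus\sG$ with $\sG^2=0$ and let $\tau\colon S[\sG]:=\Spec_S(\sO_S\oplus\sG)\to S$ be the associated affine morphism of algebraic stacks, equipped with its tautological $S$-section $S\to S[\sG]$. Since $f$ is $S$-universally schematically dominant, the base change $f'\colon X\times_S S[\sG]\to Y\times_S S[\sG]$ is schematically dominant, so the unit $\sO_{Y\times_S S[\sG]}\hookrightarrow f'_*\sO_{X\times_S S[\sG]}$ is injective. I would push this forward along the affine (hence exact) morphism $\tau_Y\colon Y\times_S S[\sG]\to Y$, using base change for the affine $\tau$ to identify $(\tau_Y)_*\sO_{Y\times_S S[\sG]}\cong \pi^*(\sO_S\oplus\sG)=\sO_Y\oplus\pi^*\sG$ and similarly $f_*(\tau_X)_*\sO_{X\times_S S[\sG]}=f_*\sO_X\oplus f_*f^*\pi^*\sG$. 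The resulting injection
\[
\sO_Y\oplus\pi^*\sG \hookrightarrow f_*\sO_X\oplus f_*f^*\pi^*\sG
\]
is, by functoriality of the base-changed $S$-section that splits both sides compatibly, the direct sum of the schematic-dominance map for $f$ and of $\eta_{\pi^*\sG}$. Injectivity of each summand then gives (i).

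For (ii), naturality of $\eta$ yields a commutative square with $\theta$ on top, $f_*f^*\theta$ on the bottom, and $\eta_\sF$, $\eta_{\pi^*\sG}$ on the sides. If $f^*\theta=0$, then $f_*f^*\theta=0$, hence $\eta_{\pi^*\sG}\circ\theta=0$; by (i), $\eta_{\pi^*\sG}$ is injective, so $\theta=0$. The converse is immediate.

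The step demanding most care is the base-change identification $(\tau_Y)_*\sO_{Y\times_S S[\sG]}\cong \sO_Y\oplus\pi^*\sG$ together with the compatibility of the induced map with the algebra-plus-ideal direct-sum decomposition. Both facts rest on the standard equivalence between quasi-coherent sheaves on an affine $S$-stack and quasi-coherent modules over the corresponding pushforward algebra, and on the compatibility of that equivalence with arbitrary base change; no flatness assumption on $\pi$ is required.
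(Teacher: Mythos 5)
Your proof is correct and follows essentially the same route as the paper: both pass to the square-zero extension $S'=\Spec_S(\sO_S\oplus\sG)$, apply the universal schematic dominance of $f$ to the base change to identify $\eta_{\pi^*\sG}$ as a direct summand of the injection $\sO_{Y'}\to f'_*\sO_{X'}$, and then deduce (ii) from (i) via naturality of the unit. Your write-up merely makes explicit the base-change identification and the splitting by the tautological section, which the paper leaves implicit.
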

\begin{proof}
Consider $S'=\Spec(\sO_S\oplus \sG)$, where $\sG$ is square-zero, and let
$X'=X\times_S S'$ and
$Y'=Y\times_S S'$. Then $\map{f'}{X'}{Y'}$ is schematically dominant, that is,
$\sO_Y\oplus \pi^*\sG\to f_*(\sO_X\oplus f^*\pi^*\sG)$ is injective.
It follows that $\eta_{\pi^*\sG}$ is injective.

If $\theta$ is zero, then so is $f^*\theta$. Conversely, if $f^*\theta$ is
zero, then so is $\eta_{\pi^*\sG}\circ \theta=(f_*f^*\theta)\circ \eta_{\sF}$.
It follows that $\theta$ is zero since $\eta_{\pi^*\sG}$ is injective.
\end{proof}

\begin{lemma}\label{L:inclusion-can-be-checked-S-fppf-local}
Let $\map{f}{X}{Y}$ and $\map{\pi}{Y}{S}$ be flat morphisms, that are locally
of finite presentation, between algebraic stacks. Let $\sF_0\subseteq \sF$ be
quasi-coherent $\sO_S$-modules and let $\sG_0\subseteq \pi^*\sF$ be a
quasi-coherent $\sO_Y$-submodule. Assume that $f$ is $S$-faithfully flat. Then
$\sG_0\subseteq \pi^*\sF_0$ if and only if $f^*\sG_0\subseteq f^*\pi^*\sF_0$.
\end{lemma}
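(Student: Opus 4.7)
The ``only if'' direction is immediate: pulling back an inclusion along the flat morphism $f$ preserves the inclusion. The content is in the converse, and the plan is to reformulate it as the vanishing of a certain map and then invoke the preceding lemma.

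First I would form the exact sequence $0\to \sF_0\to \sF\to \sF/\sF_0\to 0$ on $S$. Since $\pi$ is flat, applying $\pi^*$ yields an exact sequence
\[
0\to \pi^*\sF_0\to \pi^*\sF\to \pi^*(\sF/\sF_0)\to 0
\]
on $Y$, so $\pi^*\sF_0$ is precisely the kernel of the quotient map $q\colon \pi^*\sF\to \pi^*(\sF/\sF_0)$. Consequently the inclusion $\sG_0\subseteq \pi^*\sF_0$ is equivalent to the vanishing of the composition
\[
\theta\colon \sG_0\hookrightarrow \pi^*\sF\xrightarrow{q}\pi^*(\sF/\sF_0).
\]
Since the target of $\theta$ is the pullback of a quasi-coherent $\sO_S$-module along $\pi$, the previous lemma applies directly (here we use that $f$ being $S$-faithfully flat and locally of finite presentation means, by Proposition~\ref{P:rel-faithfully-flat}, that $f$ is $S$-universally schematically dominant): $\theta=0$ if and only if $f^*\theta=0$.

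Finally I would identify $f^*\theta$ with the analogous composition on $X$. Because $f$ is flat, $f^*$ preserves the injection $\sG_0\hookrightarrow \pi^*\sF$, and because $\pi\circ f$ is flat, $(\pi\circ f)^*\sF_0=f^*\pi^*\sF_0$ is the kernel of $f^*q$. Therefore $f^*\theta=0$ is equivalent to $f^*\sG_0\subseteq f^*\pi^*\sF_0$, which closes the loop. I do not anticipate any real obstacle: the whole argument is a formal manipulation, with the nontrivial input supplied by the preceding lemma's injectivity/faithfulness of $f^*$ on morphisms into $\pi^*$-pulled-back sheaves.
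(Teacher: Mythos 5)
Your proposal is correct and follows essentially the same route as the paper: reduce the inclusion to the vanishing of the composite $\theta\colon \sG_0\hookrightarrow \pi^*\sF\to \pi^*(\sF/\sF_0)$ and apply part (ii) of the preceding lemma, using that the target is a $\pi$-pullback. Your extra remarks on flatness of $\pi$ and $\pi\circ f$ identifying the kernels are a reasonable elaboration of what the paper leaves implicit.
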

\begin{proof}
Let $\sF'=\sF/\sF_0$. Consider the map
$\theta\colon \sG_0\inj \pi^*\sF\surj \pi^*\sF'$.
Then $\sG_0\subseteq \pi^*\sF_0$ if and only if $\theta=0$ and
$f^*\sG_0\subseteq f^*\pi^*\sF_0$ if and only if $f^*\theta=0$. Thus, the
result follows from the previous lemma.
\end{proof}

\end{section}


\begin{section}{Pure morphisms of algebraic stacks}\label{S:pure}
We begin by recalling the definition of pure morphisms of
schemes~\cite[D\'ef.~3.3.3]{raynaud-gruson}.


\begin{definition}
Let $\map{f}{X}{S}$ be a morphism of schemes,
locally of finite type. Let $s\in S$ be a point and let
$\bigl(\widetilde{S},\widetilde{s}\bigr)\to (S,s)$ be the
henselization and $\widetilde{X}=X\times_S \widetilde{S}$. We say that $f$ is
\begin{enumerate}
\item \emph{pure along $X_s$} if for every point $s_1\in \widetilde{S}$, every
associated
point $x_1\in \Ass(\widetilde{X}_{s_1})$ is the generization of a point in $X_s$;
\item \emph{pure} if $f$ is pure along $X_s$ for every $s\in S$; and
\item \emph{universally pure}, if $\map{f'}{X\times_S S'}{S'}$ is
pure for every morphism $S'\to S$.
\end{enumerate}
\end{definition}

\begin{xpar}[Examples]\label{X:purity-examples}
The two key examples of pure morphisms are~\cite[Ex.~I.3.3.4]{raynaud-gruson}:
\begin{enumerate}
\item proper morphisms, and
\item faithfully flat morphisms, locally of finite type,
with fibers that are geometrically irreducible without embedded
components.
\end{enumerate}
\end{xpar}

\begin{xpar}[Base change: descent]\label{X:purity-base-change:descent}
If $S'\to S$ is \emph{faithfully flat} and $f'$ is pure, then $f$ is pure.
Indeed, for every $s'\in |S'|$ with image $s\in |S|$, the morphism between
henselizations $(\widetilde{S}',s')\to (\widetilde{S},s)$ is surjective.  If
$x_1\in \Ass(\widetilde{X}_{s_1})$, then there exists $x'_1\in
\Ass(\widetilde{X}'_{s'_1})$ above $x_1$ (\ref{X:ass}, \itemref{XI:flat}) and,
by
purity, a specialization $x'\in X'_{s'}$. Its image $x\in X_s$, is a
specialization of $x_1$.
\end{xpar}

\begin{xpar}[Base change: stability]\label{X:purity-base-change:flat-is-stable}
If $f$ is \emph{flat}, pure and of finite presentation, then $f$ is
universally pure~\cite[3.3.7]{raynaud-gruson}.
Also, every pure morphism of finite presentation is universally pure when $S$
is locally noetherian~\cite[\spref{05J8}]{stacks-project} but not
for general $S$~\cite[\spref{05JJ}]{stacks-project}.
\end{xpar}

\begin{xpar}[Composition]\label{X:purity-compositions}
Let $\map{f}{X}{Y}$ and $\map{g}{Y}{S}$ be morphisms of schemes, locally of
finite type. If $f$ and $g$ are pure, then $g\circ f$ need not be pure,
e.g., the composition
$\Spec(k[x,y]/xy-1)\inj \Spec k[x,y]\to \Spec k[x]$ is not pure. On the other
hand, if $f$ is flat and pure and $g$ is pure, then $g\circ
f$ is pure. Indeed, the map
$\tilde{f}\colon \widetilde{X}=X\times_S \widetilde{S}\to \widetilde{Y}=Y\times_S \widetilde{S}$ is pure along $X_y$ for every $y\in Y_s$
since the henselization of $Y$ at any point of $Y_s$ factors through
$\widetilde{Y}$. Moreover, since $\tilde{f}$ is flat, we have that
$\tilde{f}(\Ass(\widetilde{X}_{s_1}))\subseteq \Ass(\widetilde{Y}_{s_1})$ for
every $s_1\in \widetilde{S}$.
Also, if $f$ is faithfully flat and $g\circ f$ is pure, then $g$
is pure. Indeed, for every point $s_1\in \widetilde{S}$, we have that
$\tilde{f}(\Ass(\widetilde{X}_{s_1}))= \Ass(\widetilde{Y}_{s_1})$.
\end{xpar}

To extend purity to morphisms of stacks, we give a slightly different definition.

\begin{definition}
Let $\map{f}{X}{S}$ be a morphism between algebraic stacks that is
quasi-separated and
locally of finite type. When $S$ is quasi-separated, we say that $f$ is
\emph{weakly closed} if $f(Z)$ is closed for every closed irreducible subset $Z\subset
|X|$, such that the generic point of $Z$ is associated in its fiber.
We say that $f$ is \emph{universally weakly closed}, if
$\map{f'}{X\times_S S'}{S'}$ is
weakly closed for every morphism $S'\to S$ where $S'$ is quasi-separated.
\end{definition}

The remarks in~\pref{X:purity-examples}, \pref{X:purity-base-change:descent}
and~\pref{X:purity-compositions} hold for ``pure'' replaced by ``weakly
closed''. For Remark~\pref{X:purity-base-change:descent}, note that $f$
is weakly closed if and only if $f(\overline{\{z\}})$ is stable under
specialization for every $z\in \Ass(X/S)$, and this can be checked flat-locally
on $S$.
The analogue of Remark~\pref{X:purity-base-change:flat-is-stable}
is false, which is not surprising: the good notion is universally weakly closed
for which we have the following valuative criterion.

\begin{proposition}
Let $\map{f}{X}{S}$ be a quasi-separated morphism, locally of finite type,
between
algebraic stacks. Then
the following are equivalent:
\begin{enumerate}
\item $f$ is universally weakly closed;
\item for every valuation ring $V$ and morphism
$\Spec V\to S$, the base change $X\times_S \Spec V\to
\Spec V$ is weakly closed; and
\item for every valuation ring $V$, morphism
$\Spec V\to S$, and associated point $z$ in the generic fiber
  $X\times_S \Spec K(V)$,
  the closure of $z$ in $|X\times_S \Spec V|$ surjects onto $\Spec V$.
\end{enumerate}
If $f$ is a morphism of schemes, then this is equivalent to:
\begin{enumerate}
\renewcommand{\theenumi}{{\upshape{(\roman{enumi}$'$)}}}
\item $f$ is universally pure.
\end{enumerate}
\end{proposition}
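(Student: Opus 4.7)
My plan is to establish the cycle (i) $\Rightarrow$ (ii) $\Rightarrow$ (iii) $\Rightarrow$ (i) and then read off (i$'$) in the scheme case by comparison with Raynaud--Gruson. The implication (i) $\Rightarrow$ (ii) is tautological. For (ii) $\Rightarrow$ (iii), I would write $X_V := X\times_S\Spec V$, let $\eta\in\Spec V$ be the generic point, and take $z\in\Ass\bigl(X_V\times_{\Spec V}\Spec K(V)\bigr)$. The closure $Z\subseteq|X_V|$ of $z$ is closed irreducible with generic point associated in its fiber, so by (ii) its image in $|\Spec V|$ is closed; it contains $\eta$, and $\Spec V$ is irreducible, so the image is all of $\Spec V$ and $Z\to\Spec V$ is surjective. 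For flatness, endow $Z$ with the reduced induced substack structure and pick a smooth presentation $U\to Z$ by a reduced scheme; every minimal prime of $\sO_U$ maps to $\eta$, so $\sO_U$ is $V$-torsion-free, which over the valuation ring $V$ is equivalent to $V$-flatness.

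For (iii) $\Rightarrow$ (i), given an arbitrary base change $f'\colon X':=X\times_S S'\to S'$ and a closed irreducible $Z'\subseteq|X'|$ whose generic point $z'$ is associated in its fiber over $s':=f'(z')$, I must show that $f'(Z')$ is closed; since $Z'$ is irreducible, this reduces to showing that $f'(Z')$ contains every specialization of $s'$. Given such a specialization $t$, the standard lifting of specializations for quasi-separated algebraic stacks supplies a valuation ring $V$ and a morphism $\Spec V\to S'$ carrying the generic point to $s'$ and the closed point to $t$. Base-changing along $\Spec V\to S'\to S$ yields $X_V\to\Spec V$; its generic fiber is $(X')_{s'}\otimes_{\kappa(s')}K(V)$, and since $K(V)/\kappa(s')$ is faithfully flat the associated point $z'$ lifts to some $z_V\in\Ass\bigl((X_V)_\eta\bigr)$. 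By (iii), $\overline{\{z_V\}}\to\Spec V$ is faithfully flat, hence surjective, so there exists $y\in\overline{\{z_V\}}$ mapping to the closed point of $\Spec V$. Its image in $X'$ lies in $\overline{\{z'\}}=Z'$, and its image in $S'$ equals $t$; hence $t\in f'(Z')$.

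For the scheme case, the equivalence of (i) with (i$'$) then follows by comparing the established (i)$\Leftrightarrow$(iii) with Raynaud--Gruson's valuative criterion for universal purity~\cite[\S I.3.3]{raynaud-gruson}. The main obstacle is the (iii)$\Rightarrow$(i) step: one must carefully invoke the existence of a valuation ring realizing a specialization $s'\leadsto t$ on a general quasi-separated algebraic stack, and the lifting of the associated point $z'$ to an associated point of the generic fiber of $X_V$ along the faithfully flat extension $K(V)/\kappa(s')$.
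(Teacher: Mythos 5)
Your cycle (i)$\implies$(ii)$\implies$(iii)$\implies$(i) is the same strategy as the paper's, and the heart of the argument --- lifting the associated point $z'$ to an associated point $z_V$ of the generic fiber of $X\times_S\Spec V$ along the flat extension $K(V)/\kappa(s')$, then using surjectivity of the faithfully flat closure $\overline{\{z_V\}}\to\Spec V$ to produce a point over the closed point --- is exactly the paper's. Your reduction of closedness of $f'(Z')$ to ``contains all specializations of $s'$'' via $f'(Z')\subseteq\overline{\{s'\}}$ is in fact slightly cleaner than the paper's appeal to pro-constructibility. Two points, however, are not settled as written. First, in (iii)$\implies$(i) you invoke ``the standard lifting of specializations for quasi-separated algebraic stacks'' on the base $S'$, but $S'$ is an \emph{arbitrary} base change of an arbitrary $S$: neither is assumed quasi-separated (only the morphism $f$ is), so the cited lemma does not apply. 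The paper sidesteps this by observing that condition (iii) is stable under base change, so it suffices to prove that (iii) implies \emph{weakly} closed, and then reducing to $S$ affine, where realizing a specialization by a dominant map $\Spec V\to\overline{\{f(z)\}}$ is classical scheme theory. You should make the same reduction rather than work on $|S'|$ directly.

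Second, your treatment of the scheme case rests on ``Raynaud--Gruson's valuative criterion for universal purity,'' but no such criterion is stated in \S I.3.3 of their paper (their results there concern purity versus closedness of $\Ass(X/S)\to S$ and the passage from pure to universally pure for flat finitely presented morphisms). The equivalence (i)$\iff$(i$'$) therefore still needs an argument. The paper supplies a short one: (i)$\implies$(i$'$) by base-changing to the henselization $\widetilde S\to S$, where closedness of the image of $\overline{\{x'\}}$ in the local scheme $\widetilde S$ forces it to contain the closed point, i.e.\ $x'$ generizes a point of $X_s$; and (i$'$)$\implies$(ii) because weak closedness over a valuation ring may be checked after the faithfully flat base change to its henselization, where it is precisely the purity condition. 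You should replace the phantom citation with this (or an equivalent) direct argument.
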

\begin{proof}
Clearly, (i)$\implies$(ii)$\implies$(iii). If $f$ is a morphism of schemes,
then trivially (i)$\implies$(i$'$) and we note that (i$'$)$\implies$(ii) since it
is enough to verify (ii) for henselian valuation rings.

To see that (iii)$\implies$(i) it is enough
to prove that $f$ is weakly closed. Let $z\in |X|$ be a point that is associated
in its fiber and let $Z=\overline{\{z\}}$. 
It is enough to prove that $f(Z)=\overline{\{f(z)\}}$. This can be
verified after the base change $S'=\Spec V\to S$ for every valuation ring $V$
and every dominant morphism $\Spec V\to \overline{\{f(z)\}}$. Then
$f(Z)=\Spec V$ by (iii) and the result follows.
\end{proof}

\begin{definition}
Let $\map{f}{X}{Y}$ be a flat morphism of finite presentation between
algebraic stacks. We say that $f$ is \emph{pure} if it is universally weakly
closed.
\end{definition}

This definition
coincides with the usual definition for flat morphisms of schemes by~\pref{X:purity-base-change:flat-is-stable}.
It also coincides with the definition of pure
in~\cite[B.1]{romagny_components-in-families}.

The following lemma, which is a direct transcription of
an argument in~\cite[proof of Prop.~3.3.6]{raynaud-gruson}, shows that
a flat morphism $X\to S$ of finite presentation is weakly closed if and only if
the map $\Ass(X/S)\to S$ is closed under specializations, i.e., if subsets
closed under specialization in $\Ass(X/S)$ maps to subsets closed under
specialization in $S$.
\begin{lemma}\label{L:ass-closure}
Let $S$ be a scheme and let $X$ be an algebraic stack that is flat and
of finite presentation over $S$. Let $s,s_1\in |S|$ and $x_1\in \Ass(X_{s_1})$.
If $|X_s|\cap \overline{\{x_1\}}\neq \emptyset$, then
$\Ass(X_s)\cap \overline{\{x_1\}}\neq \emptyset$.
\end{lemma}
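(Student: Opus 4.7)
My plan is to adapt the argument of \cite[proof of Prop.~3.3.6]{raynaud-gruson} to the stacky setting by reducing to a scheme presentation. The proof will proceed in three stages: a noetherian approximation reducing to $S = \Spec R$ noetherian affine, passage to a smooth presentation $U \to X$ to reduce to a statement about schemes, and the classical algebraic argument on local rings.

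The first stage is a standard limit argument, since the statement is local on $S$ and only finitely many points are involved. For the second stage, I would choose a smooth surjective presentation $p \colon U \to X$ with $U$ an affine scheme; then $U \to S$ is flat of finite presentation, and since $\Ass(X_{s'}) = p(\Ass(U_{s'}))$ by definition, I can lift $x'$ to some $u' \in \Ass(U_{s'})$ with $p(u') = x'$. To lift the specialization from $x'$ to $x$ to a specialization inside $|U|$, I would realize it via a morphism $\Spec V \to X$ from a valuation ring with generic point mapping to $x'$ and closed point mapping to $x$, form the pullback $\Spec V \times_X U \to \Spec V$ (smooth surjective), and use going-down for flat morphisms to select a point over the closed point of $V$ that is a specialization of a chosen point over the generic point mapping to $u'$. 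This reduces the lemma to the scheme statement for $U \to S$.

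In the third stage, after localizing at $s$ and a lift $u$ of $x$, the claim reduces to the following algebraic statement: for $R$ noetherian local with maximal ideal $\mathfrak{m}$, $A$ noetherian local flat over $R$, a prime $\mathfrak{q}' \subsetneq \mathfrak{m}$ of $R$, and $\mathfrak{p}' \in \Ass_A(A/\mathfrak{q}' A)$ with $\mathfrak{p}' \cap R = \mathfrak{q}'$, there exists $\mathfrak{P} \in \Ass_A(A/\mathfrak{m} A)$ with $\mathfrak{P} \supseteq \mathfrak{p}'$. I would prove this by induction on the length of a saturated chain $\mathfrak{q}' = \mathfrak{q}_0 \subsetneq \mathfrak{q}_1 \subsetneq \cdots \subsetneq \mathfrak{q}_n = \mathfrak{m}$ in $R$. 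Each codimension-one step picks a non-zerodivisor $\tau \in \mathfrak{q}_{i+1}$ on $R/\mathfrak{q}_i$, which by flatness remains a non-zerodivisor on $A/\mathfrak{q}_i A$, and then invokes the standard fact that any prime of $A$ minimal over $\mathfrak{p}_i + \tau A$ is associated in $A/(\mathfrak{q}_i A + \tau A)$; Krull's Hauptidealsatz then identifies this with an associated prime of $A/\mathfrak{q}_{i+1} A$ containing $\mathfrak{p}_i$.

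The main obstacle is this final algebraic propagation: one must carefully use flatness of $A$ over $R$ to transfer associated primes across fibers and to match associated primes of the intermediate quotients $A/(\mathfrak{q}_i A + \tau A)$ with those of $A/\mathfrak{q}_{i+1} A$ at the next stage of the chain. The stacky reduction is more formal but requires care with how specializations lift through a smooth surjection, and in particular that the resulting lift $u$ still has $u'$ as a generization in $|U|$.
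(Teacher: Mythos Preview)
Your Stage~2 lifting contains a genuine error. You fix $u'\in\Ass(U_{s'})$ first and then try to produce a specialization $u'\rightsquigarrow u$ with $u\in U_s$, invoking ``going-down for flat morphisms'' on $\Spec V\times_X U\to\Spec V$. But going-down for flat maps lifts \emph{generizations}, not specializations: given a point over the closed point of $V$ you can find a generization over the generic point, not the other way round. Concretely, take $S=\Spec V$ a DVR, $X=\mathbb{P}^1_V$, $U=\mathbb{A}^1_V\sqcup\mathbb{A}^1_V$ the standard cover, $x'$ the generic point of $X_{s'}$, and $x=0\in X_s$. Both generic points $u'_1,u'_2$ of the two copies of $\mathbb{A}^1_{K(V)}$ lie in $\Ass(U_{s'})$ above $x'$, but only one of them specializes to a point over $x=0$; if you happen to pick the other one, no lift exists. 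The paper avoids this by reversing the order: first lift the specialization $x'\rightsquigarrow x$ via going-down (choose $u$ over $x$, then a generization $u'$ over $x'$), and only then replace $u'$ by a maximal point of the smooth fibre $p^{-1}(x')$, which automatically lies in $\Ass(U_{s'})$ and still specializes to $u$.

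Your Stage~3 is a different route from the paper and is essentially sound, but note one wrinkle: the minimal prime $\mathfrak{P}$ over $\mathfrak{p}_i+\tau A$ lies in $\Ass\bigl(A/(\mathfrak{q}_iA+\tau A)\bigr)$, and since $A$ is $R$-flat this means $\mathfrak{P}\cap R$ is some height-one prime over $\mathfrak{q}_i$ containing $\tau$---not necessarily your prechosen $\mathfrak{q}_{i+1}$. So the induction should be on $\dim(R/(\mathfrak{p}'\cap R))$ rather than on a fixed saturated chain. The paper instead uses the multiplicative system $\Sigma\subseteq\sO_{U,u}$ of elements regular on the closed fibre: by \cite[3.2.5]{raynaud-gruson} the map $\sO_{U,u}\to\Sigma^{-1}\sO_{U,u}$ is $A$-universally injective, so tensoring with $\kappa(s')$ stays injective, forcing $u'$ to survive in $\Spec\Sigma^{-1}\sO_{U,u}$ and hence to specialize to one of its maximal ideals, which are exactly $\Ass(U_s)$. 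This is a one-step argument with no induction and requires no noetherian reduction, so your Stage~1 is also unnecessary.
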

\begin{proof}
We may assume that $S=\Spec A$ is affine. Pick a smooth presentation
$\map{p}{U=\Spec B}{X}$. If
${|X_s|\cap \overline{\{x_1\}}\neq \emptyset}$, then there exists a point $u_1\in
U$ above $x_1$ such that $|U_s|\cap \overline{\{u_1\}}\neq \emptyset$. We
may assume that $u_1$ is maximal in $p^{-1}(x_1)$ and then $u_1\in \Ass(U_{s_1})$.
Since $p(\Ass(U_s))=\Ass(X_s)$, it is enough to prove that
$\Ass(U_s)\cap \overline{\{u_1\}}\neq \emptyset$.

Let $u\in |U_s|\cap \overline{\{u_1\}}$ and
let $\Sigma\subseteq \sO_{U,u}$ be the set of elements whose images in
$\sO_{U,u}\otimes \kappa(s)$ are non-zero divisors. Then $\sO_{U,u}\to
\Sigma^{-1}\sO_{U,u}$ is $A$-universally injective and $\Sigma^{-1}\sO_{U,u}$
is a semi-local ring whose maximal ideals are associated points of
$U_s$~\cite[3.2.5]{raynaud-gruson}. In particular, the morphism
$\sO_{U,u}\otimes \kappa(s_1)\to (\Sigma^{-1}\sO_{U,u})\otimes \kappa(s_1)$ is
injective. Since $u_1$ is associated in $\Spec(\sO_{U,u}\otimes \kappa(s_1))$,
this means that $u_1\in \Spec(\Sigma^{-1}\sO_{U,u}\otimes \kappa(s_1))$; hence
$u_1$ is a generization of an associated point $u_0$ of $U_s$.
\end{proof}

\end{section}


\begin{section}{Homological projectivity}\label{S:hom-proj}
The main theorem of~\cite[\S I.3]{raynaud-gruson} is the following relation
between purity and projectivity for affine morphisms.

\begin{theorem}[Raynaud--Gruson]\label{T:RG-main}
Let $\map{f}{X}{Y}$ be an affine finitely presented morphism of schemes.
The following
are equivalent:
\begin{enumerate}
\item $f$ is flat and pure;
\item $f_*\sO_X$ is locally projective; and
\item $f_*\sO_X$ is locally free.
\end{enumerate}
\end{theorem}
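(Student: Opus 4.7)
The theorem is essentially a repackaging of \cite[Thm.~I.3.3.5]{raynaud-gruson} in sheaf-theoretic language. My plan is to reduce to the case of an affine base $Y$ by showing that all three conditions are local for the fppf topology on $Y$, and then to invoke the classical Raynaud--Gruson theorem together with the fact that projective modules over a commutative ring are fppf-locally free.

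First I would observe the fppf-local nature of each condition: flatness is trivially fppf-local on $Y$; purity of a flat finitely presented morphism is fppf-local on $Y$ by \pref{X:purity-base-change}; and local projectivity and local freeness are fppf-local on $Y$ by their definitions. Hence I may assume $Y = \Spec A$ is affine and write $X = \Spec B$ for a finitely presented $A$-algebra $B$. Proposition~\ref{P:loc-proj-vs-loc-free}\ref{PI:affine:proj=loc-proj} then identifies condition (ii) with the purely module-theoretic statement that $B$ is a projective $A$-module, and in this reduced form the equivalence (i)$\Leftrightarrow$(ii) is precisely \cite[Thm.~I.3.3.5]{raynaud-gruson}.

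The implication (iii)$\Rightarrow$(ii) is immediate since free modules are projective. For (ii)$\Rightarrow$(iii) in the reduced affine setting, I would appeal to the fact that a projective $A$-module is fppf-locally free, which is \cite[II.2.5.1 and II.3.1.3]{raynaud-gruson}---the same inputs already used in the proof of Proposition~\ref{P:loc-proj-vs-loc-free}\ref{PI:affine:proj=loc-proj}. The main obstacle in the whole argument is of course the nontrivial direction of the classical Raynaud--Gruson theorem, that ``flat and pure'' forces ``projective''; this is the substance of \cite[\S I.3]{raynaud-gruson} and is imported wholesale, so the only genuinely new work here is verifying the fppf-local reduction to an affine base.
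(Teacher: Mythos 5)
Your reduction to an affine base and your treatment of (i)$\iff$(ii) are essentially what the paper does: both rest on \cite[Thm.~I.3.3.5]{raynaud-gruson}, and identifying condition (ii) with ``$B$ is a projective $A$-module'' via Proposition~\ref{P:loc-proj-vs-loc-free}~\ref{PI:affine:proj=loc-proj} is fine. The gap is in (ii)$\implies$(iii). You assert that a projective $A$-module is fppf-locally free and cite \cite[II.2.5.1 and II.3.1.3]{raynaud-gruson}, but those results go in the \emph{opposite} direction: they give fppf descent of projectivity (locally projective $\implies$ projective over an affine scheme), which is exactly how they are used in the proof of Proposition~\ref{P:loc-proj-vs-loc-free}~\ref{PI:affine:proj=loc-proj}. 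Worse, the general claim ``projective $\implies$ locally free'' is false for arbitrary modules over arbitrary rings: the remark following Proposition~\ref{P:loc-proj-vs-loc-free} notes that parts \ref{PI:proj+finite-rank=vector-bundle:noeth} and \ref{PI:loc-free=loc-proj:noeth} fail without noetherian hypotheses, and Kaplansky's example of a projective module of rank $\leq 1$ whose rank-zero locus is closed but not open cannot be locally free, since locally free modules have locally constant rank.

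What makes (ii)$\implies$(iii) true here is the extra structure: the module is $f_*\sO_X$ for a \emph{finitely presented} affine morphism. The paper's proof cites \cite[Cor.~I.3.3.12]{raynaud-gruson} for this equivalence and explains the mechanism: over a noetherian base it follows from Proposition~\ref{P:loc-proj-vs-loc-free}~\ref{PI:loc-free=loc-proj:noeth}, because on a noetherian stack the rank of a locally projective sheaf is locally constant (Kaplansky over local rings plus Bass's results on big projective modules); the non-noetherian case is then deduced by descending $f$ to a noetherian affine base via standard limit methods, using the already-established equivalence (i)$\iff$(ii) together with the fact that purity behaves well under approximation \cite[Cor.~I.3.3.10]{raynaud-gruson}. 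Your argument needs some version of this noetherian reduction (or a direct appeal to Cor.~I.3.3.12); the blanket statement you invoke is not available.
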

\begin{proof}
The equivalence between (i) and (ii) is~\cite[Thm.~I.3.3.5]{raynaud-gruson}.
The equivalence between (ii) and (iii) is~\cite[Cor.~I.3.3.12]{raynaud-gruson}.
Note that if $Y$ is noetherian, then the latter equivalence follows directly
from
Proposition~\pref{P:loc-proj-vs-loc-free}~\itemref{PI:loc-free=loc-proj:noeth}. The
non-noetherian case follows from the noetherian case using the equivalence
between (i) and (ii) and using that pure morphisms behave well under
approximation~\cite[Cor.~I.3.3.10]{raynaud-gruson}.
\end{proof}

Local projectivity of $f_*\sO_X$ is not local on $X$. To obtain a non-affine
analogue of the theorem above, we introduce the following definition.


\begin{definition}
Let $\map{f}{X}{Y}$ be a flat morphism of finite presentation between
algebraic stacks. We say that $f$ is \emph{homologically projective}
(resp.\ \emph{strongly homologically projective}) if there exists
\begin{enumerate}
\item an fppf-covering $\{\Spec(A_i)\to Y\}$; and 
\item flat morphisms $\map{q_i}{\Spec(B_i)}{X\times_Y \Spec(A_i)}$,
  locally of finite presentation;
\end{enumerate}
such that for every $i$
\begin{enumerate}\renewcommand{\theenumi}{{\upshape{(\alph{enumi})}}}
\item the composition
$\Spec(B_i)\xrightarrow{q_i} X\times_Y \Spec(A_i)\to \Spec(A_i)$
makes $B_i$ into a projective $A_i$-module; and
\item $q_i$ is $\Spec(A_i)$-faithfully flat (resp.\ faithfully flat), cf.\ 
  Definition~\pref{D:rel-faithfully-flat}.
\end{enumerate}
\end{definition}
Here ``homological'' is to indicate that projective is interpreted as
in homological algebra and not as in algebraic geometry.
It should not be confused with the notion of
\emph{cohomologically projective} morphisms in~\cite[3.18]{alper_good-mod-spaces}.

By definition, the notion of (strong) homological projectivity is stable under
base
change and fppf-local on the target.
If $\map{p}{X'}{X}$ is faithfully flat and locally of finite presentation and
$f\circ p$ is (strongly) homologically projective, then $f$ is (strongly)
homologically projective
but the converse does not hold. It is, a priori, not clear whether the
composition of
two (strongly) homologically projective morphisms is (strongly) homologically
projective.

Recall that $X$ has the resolution property if every quasi-coherent
sheaf of finite type on $X$ admits a surjection from a vector bundle.
\begin{theorem}\label{T:pure=hom-proj}
Let $\map{f}{X}{Y}$ be a morphism of algebraic stacks that is
flat and of finite presentation. Consider the following conditions:
\begin{enumerate}
\item $f$ is affine and $f_*\sO_X$ is locally projective;
\item $f$ is strongly homologically projective;
\item $f$ is homologically projective; and
\item $f$ is pure.
\end{enumerate}
Then (i)$\implies$(ii)$\implies$(iii)$\iff$(iv). If $f$ is affine, then
all four conditions are equivalent. If $X$ has the resolution property
fppf-locally on $Y$ (e.g., if $f$ is quasi-affine),
then (ii)$\iff$(iii).
\end{theorem}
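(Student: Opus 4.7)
The implications (i)$\implies$(ii)$\implies$(iii) and the affine case are essentially formal. For (i)$\implies$(ii), we work fppf-locally on $Y=\Spec A$ so that $X=\Spec B$ with $B$ projective over $A$, and take $q$ to be the identity, which is trivially faithfully flat of finite presentation. For (ii)$\implies$(iii), any faithfully flat morphism of finite presentation is automatically $\Spec A$-faithfully flat, since its image contains the relative assassin by Proposition~\ref{P:rel-faithfully-flat}. If $f$ is affine, then (iv)$\implies$(i) is precisely Raynaud--Gruson's Theorem~\ref{T:RG-main}, giving all four equivalences.

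For (iii)$\implies$(iv), let $q\colon \Spec B\to X\times_Y \Spec A$ witness (iii). By Theorem~\ref{T:RG-main} the composition $\Spec B\to \Spec A$ is pure. To verify (iv) we use the valuation criterion, reducing via base change along $\Spec V\to \Spec A$ to the case $A=V$ a valuation ring; this preserves $\Spec V$-faithful flatness of the pullback $q_V$, and by Kaplansky (Lemma~\ref{L:kaplansky-local-ring}) keeps $B\otimes_A V$ free over $V$. For an associated point $z$ in the generic fibre of $X\times_Y \Spec V$, we lift it via $q_V$ to a maximal $w\in q_V^{-1}(z)$, which is then associated in the generic fibre of $\Spec(B\otimes_A V)$. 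Purity of $\Spec(B\otimes_A V)\to \Spec V$ produces a specialization $w_0$ of $w$ in the closed fibre, and $q_V(w_0)$ specializes $z$ to a point in the closed fibre of $X\times_Y \Spec V$; hence $\overline{\{z\}}$ is faithfully flat over $\Spec V$.

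For (iv)$\implies$(iii), working fppf-locally on $Y=\Spec A$, the goal is to construct $q\colon \Spec B\to X$ with $B$ projective over $A$ and $q$ being $\Spec A$-faithfully flat of finite presentation. Starting from a smooth affine presentation $p\colon \Spec C\to X$ (which is automatically $\Spec A$-faithfully flat by surjectivity), the composition $\Spec C\to \Spec A$ is flat of finite presentation but in general not pure. The plan is to replace $\Spec C$ by a (non-open) affine subscheme $\Spec B$ containing the preimage $p^{-1}(\Ass(X/\Spec A))$ and whose composition to $\Spec A$ is pure; Theorem~\ref{T:RG-main} then makes $B$ projective over $A$. This is the main obstacle of the theorem: since smooth presentations and open immersions do not preserve purity, the refinement must be carried out at the level of the relative assassin itself. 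I expect to use Lemma~\ref{L:ass-closure} together with the fibrewise characterization of purity and possibly additional fppf base change on $Y$ to carve out a suitable pure affine envelope of the assassin.

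Finally, for the resolution-property case (iii)$\implies$(ii): fppf-locally on $Y$, the resolution property realises $X$ as a quotient $[U/\mathrm{GL}_n]$ of a quasi-affine scheme, whence covering $U$ by affine opens yields an affine faithfully flat morphism $V\to X$. The plan is to enlarge the cover $q$ from (iii) by $V$, obtaining $\Spec B\sqcup V\to X$ that is genuinely surjective; the nontrivial issue is arranging that $\sO(V)$ be projective over $A$, which I expect to achieve by applying the pure-refinement construction of the previous paragraph to $V\to X\to \Spec A$, exploiting that $X\to \Spec A$ is already pure by the established equivalence (iii)$\iff$(iv).
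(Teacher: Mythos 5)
Your handling of (i)$\implies$(ii)$\implies$(iii), of the affine case, and of (iii)$\implies$(iv) is essentially correct, though the last is more laborious than necessary: once Theorem~\ref{T:RG-main} gives that $\Spec B\to\Spec A$ is pure and the image of $\Spec B$ in $X$ contains $\Ass(X/\Spec A)$, weak closedness of $X\to\Spec A$ follows directly (any generic point associated in its fibre lifts to an associated point of a fibre of $\Spec B$), with no valuative detour. The real problem is that the two hard implications are left as plans whose key ingredients are absent. For (iv)$\implies$(iii), the missing input is the local structure result \cite[Prop.~3.3.2]{raynaud-gruson}: for a smooth presentation $U=\Spec C\to X$ and a point $y\in Y$, there exist \emph{\'etale} morphisms $U'=\Spec B'\to U$ and $Y'=\Spec A'\to Y$ with $\kappa(y')=\kappa(y)$ such that $B'$ is a projective $A'$-module and the image of $U'\to U$ contains $\Ass(U_y)$. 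Your plan of carving a ``non-open affine subscheme'' out of $\Spec C$ containing $p^{-1}(\Ass(X/\Spec A))$ and then invoking Theorem~\ref{T:RG-main} does not produce this: projectivity of $B'$ comes from the construction in loc.\ cit.\ (after an \'etale base change on $Y$ and an \'etale cover of $U$), not from purity of some composition, and you give no mechanism for building such a $B$. Moreover, even granted the quasi-section over the single fibre $X_y$, one must still show its image contains $\Ass(X_{y'})$ for all $y'$ in an open neighbourhood of $y$; this is where purity, Lemma~\ref{L:ass-closure} (to handle generizations of $y$) and \cite[Lem.~3.3.9]{raynaud-gruson} (to get openness) enter, and none of this appears in your sketch.

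Your plan for (iii)$\implies$(ii) under the resolution property would moreover fail as stated. Writing $X=[U/\GL_n]$ with $U$ quasi-affine and covering $U$ by affine opens does give a surjective morphism $V\to X$, but there is no reason for $V\to X\to\Spec A$ to be pure, and purity of $X\to\Spec A$ does not help: restricting to an open subset destroys purity, exactly as in the example $\Spec(k[x,y]/(xy-1))\inj\Spec k[x,y]\to\Spec k[x]$ recalled in~\pref{X:purity-compositions}, so the ``pure-refinement'' you hope to apply to $V$ has nothing to refine to. The missing idea is Jouanolou's trick: there is an affine torsor $E\to U$ under a vector bundle, so $E\to X$ is affine, surjective and flat with geometrically irreducible fibres, hence pure; the composition $E\to X\to\Spec A$ of a flat pure morphism with a pure morphism is then pure, and since $E=\Spec B$ is affine, $B$ is projective over $A$ by the affine case of the theorem. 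This single affine scheme witnesses strong homological projectivity, with no patching of affine opens required.
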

\begin{proof}
From the definitions, it follows that (i)$\implies$(ii)$\implies$(iii). To
prove that (iii)$\implies$(iv), we may assume that $Y=\Spec A$ and that there
is a $Y$-faithfully flat and finitely presented morphism $U=\Spec B\to X$ such
that $B$ is a projective $A$-module. By Theorem~\pref{T:RG-main}, we have that
$U\to Y$ is pure. Since the image of $U$ contains $\Ass(X/Y)$, it follows that
$X\to Y$ is pure.
When $f$ is affine, (iv)$\implies$(i) by Theorem~\pref{T:RG-main}.

For (iv)$\implies$(iii), suppose that $f$ is pure. As before we may assume that
$Y$ is
affine. Pick a smooth presentation $U=\Spec B\to X$. Let $y\in Y$ be a point.
Then, by~\cite[Prop.\ 3.3.2]{raynaud-gruson}, there exists a commutative diagram
\[
\xymatrix{%
U'\ar[d]\ar[r] & U\ar[d] \\
Y'\ar[r] & Y}
\]
and a point $y'\in Y'$ above $y$ such that
\begin{itemize}
\item $U'\to U$ and $Y'\to Y$ are \etale{}, and $\kappa(y)=\kappa(y')$;
\item $U'=\Spec B'$ and $Y'=\Spec A'$ are affine and $B'$ is a projective
  $A'$-module; and
\item the image of $U'\to U$ contains $\Ass(U_y)$.
\end{itemize}
In particular, the image of $U'\to U\to X$ contains $\Ass(X_y)$. After replacing
$X$, $Y$ and $U$ by their pull-backs along the base change $Y'\to Y$, we may
assume that $Y'=Y$.

We now claim
that the image of $U'\to U\to X$ contains $\Ass(X_{y_1})$ for every generization
$y_1$ of $y$. To see this, let $x_1\in \Ass(X_{y_1})$. By the definition of
purity, there exists a point $x\in X_y\cap \overline{\{x_1\}}$. By
Lemma~\pref{L:ass-closure}, there exists a point
$x_0\in \Ass(X_y)\cap \overline{\{x_1\}}$.
Since $x_0$ is in the image of $U'$, so is its
generization~$x_1$.

By~\cite[Lem.~3.3.9]{raynaud-gruson}, there is then an open neighborhood $y\in
V\subseteq Y$ such that the image of $U'\to U\to X$ contains $\Ass(X_{y_1})$ for
every $y_1\in V$. This means that $U'\to U\to X$ is $Y$-faithfully flat
over $V$, that is, $X\to Y$ is homologically projective over $V$. As the
question is local on $Y$, it follows that $X\to Y$ is homologically
projective.

Under the additional assumption on $X$, we will we prove that
(iv)$\implies$(ii). For this,
we may work locally on $Y$ and assume that $Y=\Spec A$
is affine and that $X$ has the resolution property. Then $X=[U/\GL_n]$ for some
quasi-affine scheme
$U$~\cite{totaro_resolution-property,gross_tensor-generators}. By Jouanolou's
trick, there is an affine vector bundle torsor $E\to
U$~\cite[Lem.~1.5]{jouanolou_Mayer-Vietoris} (also
see~\cite[4.3--4.4]{weibel_homotopy-algebraic-K-theory}). Since $E\to X$ is
flat with geometrically integral fibers, hence flat and pure, it follows
that $E\to X\to Y$ is pure~\eqref{X:purity-compositions}.
Since $E=\Spec B$ is affine, we have that $B$ is $A$-projective; thus, 
$f$ is strongly homologically projective.
\end{proof}



\end{section}


\begin{section}{Existence of minimal subsheaves}\label{S:minimal}
Let $\map{f}{X}{Y}$ be a faithfully flat morphism between quasi-compact
algebraic stacks and let $\sF\in \QCoh(Y)$.
Assume that $\sG_0\subseteq \sG:=f^*\sF$ is a quasi-coherent subsheaf of finite
type. If $\sF$ is the union of its quasi-coherent subsheaves $\sF_\lambda$
of finite type, then, for sufficiently large $\lambda$, we have that
$\sG_0\subseteq f^*\sF_\lambda$.

Conversely, if $\sG=f^*\sF$ is the union of its quasi-coherent subsheaves
$\sG_\lambda$ of finite type and for every $\sG_\lambda$ there exists
$\sF_\lambda\subseteq \sF$ of finite type such that $\sG_\lambda\subseteq
f^*\sF_\lambda$, then $\sF$ is the union of its subsheaves of finite
type.

We will see that, under suitable hypotheses, for every $\sG_\lambda$ of
finite type as above there is a \emph{minimal} $\sF_\lambda$ as above and it is
of finite type. This is, however, not always the case:

\begin{example}\label{E:flat-non-projective}
Let $A$ be a discrete valuation ring with fraction field $K$ and uniformizing
parameter $t$. Let $B=A\times K$, which is a faithfully flat $A$-algebra. Let
$M=A$ and consider the submodule $N_0=(0\times K)\subseteq M\otimes_A B=B$. For
every non-trivial ideal $M_n=(t^n)\subseteq A=M$, we then have that
$N_0\subseteq M_n\otimes_A B=(t^n)\times K$. But the intersection is
$\bigcap M_n=0$ and $N_0\nsubseteq (\bigcap M_n)\otimes_A B=0$. Hence, there is
no minimal submodule $M'$ of $M$ such that $N_0\subseteq M'\otimes_A B$.
\end{example}

The problem in Example~\pref{E:flat-non-projective} is that infinite
intersections do not commute with
flat pull-back. This does not happen if we replace flatness with projectivity.

\begin{lemma}[Serre]\label{L:affine-projective-submodule-descent}
Let $A$ be a ring and let $B$ be an $A$-algebra which is
projective as an $A$-module. Let $M$ be an $A$-module and let $N_0\subseteq
M\otimes_A B$ be a $B$-submodule. Then there is a unique minimal $A$-submodule
$M_0\subseteq M$ such that $N_0\subseteq M_0\otimes_A B$. Moreover,
\begin{enumerate}
\item if $N_0$ is of finite type, then so is $M_0$; and
\item if $A'$ is an $A$-algebra and we let $B'=B\otimes_A A'$, $M'=M\otimes_A
  A'$, $M'_0:=\image(M_0\otimes_A A'\to M')$ and $N'_0=\image(N_0\otimes_B
  B'\to M'\otimes_{A'} B')$, then $M'_0$ is the minimal $A'$-submodule of
  $M'$ such that $N'_0\subseteq M'_0\otimes_{A'} B'$.
\end{enumerate}
\end{lemma}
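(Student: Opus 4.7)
The plan is to use the dual basis lemma for projective modules: since $B$ is a projective $A$-module, there exist elements $\{b_i\}_{i \in I} \subseteq B$ and $A$-linear maps $\phi_i \colon B \to A$ such that for every $b \in B$ only finitely many $\phi_i(b)$ are nonzero and $b = \sum_i \phi_i(b) b_i$. Tensoring with $M$ produces $A$-linear maps $\psi_i = \mathrm{id}_M \otimes \phi_i \colon M \otimes_A B \to M$, and I define
\[
  M_0 := \sum_{i \in I} \psi_i(N_0) \subseteq M.
\]

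To verify that $N_0 \subseteq M_0 \otimes_A B$, take $n \in N_0$ and write it as a finite sum $n = \sum_j m_j \otimes b_j$ in $M \otimes_A B$. Applying the dual basis identity to each $b_j$ and rearranging gives $n = \sum_i \psi_i(n) \otimes b_i$; this is a finite sum because the set of indices $i$ with $\phi_i(b_j) \neq 0$ for some $j$ is finite, and each coefficient $\psi_i(n)$ lies in $M_0$ by construction. For minimality, let $M' \subseteq M$ be any submodule with $N_0 \subseteq M' \otimes_A B$. Since $B$ is flat, $M' \otimes_A B$ injects into $M \otimes_A B$, and by functoriality each $\psi_i$ restricts to a map $M' \otimes_A B \to M'$ landing in $M'$. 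Hence $\psi_i(N_0) \subseteq M'$ for every $i$, whence $M_0 \subseteq M'$. This simultaneously gives existence and uniqueness of the minimum.

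For (i), if $N_0$ is generated over $B$ by $n_1, \ldots, n_r$, then writing each $n_k$ as a finite sum $\sum_j m_{k,j} \otimes b_{k,j}$ and using that only finitely many $\phi_i$ are nonzero on any given $b_{k,j}$, one sees that $\psi_i(n_k) = 0$ for all but finitely many pairs $(i,k)$; these finitely many elements already generate $M_0$ over $A$. For (ii), note that $\{(b_i \otimes 1,\, \phi_i \otimes \mathrm{id}_{A'})\}$ is a dual basis for $B'$ over $A'$, so the analogous construction applied to $N'_0 \subseteq M' \otimes_{A'} B'$ yields the minimum $M''_0 := \sum_i (\psi_i \otimes \mathrm{id}_{A'})(N'_0)$. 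Tracking elements through the identification $M' \otimes_{A'} B' \cong M \otimes_A B \otimes_A A'$, one checks that $(\psi_i \otimes \mathrm{id}_{A'})$ applied to the image of $n \in N_0$ equals $\psi_i(n) \otimes 1 \in M \otimes_A A' = M'$. Therefore $M''_0$ coincides with the image of $M_0 \otimes_A A' \to M'$, which is exactly $M'_0$.

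I expect the only real obstacle to be notational rather than conceptual: the dual basis formula makes the defining property and minimality almost immediate, and the finite type assertion follows by inspection, so the technical work is concentrated in executing the base-change step (ii) cleanly by keeping straight the canonical isomorphisms relating $M \otimes_A B \otimes_A A'$, $M' \otimes_{A'} B'$, and $M \otimes_A B'$, and verifying that the dual basis construction is functorial with respect to them.
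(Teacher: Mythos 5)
Your proof is correct and takes essentially the same route as the paper: the dual basis $\{(b_i,\phi_i)\}$ is just the explicit form of the paper's choice of a free module $F$ containing $B$ as a direct summand with basis $\{e_i\}$, and your coefficients $\psi_i(x)$ are exactly the paper's coordinates $x_i$ of $x\in M\otimes_A B\subseteq M\otimes_A F$. The definition $M_0=\sum_i\psi_i(N_0)$, the minimality argument, and the treatment of (i) and (ii) all match the paper's proof.
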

\begin{proof}
Choose a free $A$-module $F$ such that $B$ is a direct summand of $F$ and pick
a basis $\{e_i\}$ of $F$. Let $M_0\subseteq M$ be an $A$-submodule.
Then $M_0\otimes_A B\subseteq
M_0\otimes_A F\subseteq M\otimes_A F$ and $M\otimes_A B\subseteq M\otimes_A
F$. Let $x\in N_0$ be an element. Then $x=\sum_i x_i \otimes e_i$ in
$M\otimes_A F$, and, using the retraction $F\to B$, we may also write $x=\sum_i
x_i \otimes b_i$ in $M\otimes_A B$. Thus $x\in M_0\otimes_A B$ if and only if
$x_i\in M_0$ for every $i$. It follows that the minimal submodule $M_0$ is the
submodule generated by the $x_i$'s when $x$ ranges over a set of generators of
$N_0$. The remaining claims follows immediately from the construction of~$M_0$.
\end{proof}

Using purity, we give the following global version.

\begin{theorem}\label{T:existence-of-minimal}
Let $\map{f}{X}{Y}$ be a flat morphism of finite presentation between algebraic
stacks. Assume that $f$ is pure. Let $\sF\in\QCoh(Y)$ and let
$\sG_0\subseteq \sG:=f^*\sF$ be a quasi-coherent submodule. Then there is a
unique minimal quasi-coherent submodule $\sF_0\subseteq \sF$ such that
$\sG_0\subseteq
f^*\sF_0$. Moreover,
\begin{enumerate}
\item if $\sG_0$ is of finite type, then so is $\sF_0$; and
\item if $\map{f'}{X'}{Y'}$ is the base change of $f$ along
  a morphism ${\map{g}{Y'}{Y}}$, then the image $\sF'_0$ of
  $g^*\sF_0\to g^*\sF$ is the minimal quasi-coherent submodule
  such that $f'^*\sF'_0$ contains the image of ${g'^*\sG_0\to g'^*\sG}$.
\end{enumerate}
\end{theorem}
\begin{proof}
By Theorem~\pref{T:pure=hom-proj}, $f$ is homologically projective.
By fppf descent, it is enough to prove the statement after replacing $Y$ with
an fppf cover. We may thus assume that $Y=\Spec A$ and that there exists a
$Y$-faithfully flat morphism $\map{q}{X'=\Spec B}{X}$ of finite presentation
such that $B$ is a projective $A$-module.
If $\sF_0\subseteq \sF$ is a submodule, then $\sG_0\subseteq f^*\sF_0$ if and
only if $q^*\sG_0\subseteq q^*f^*\sF_0$ (Lemma~\ref{L:inclusion-can-be-checked-S-fppf-local}).
We may thus replace $X$ with $X'$ and
assume that $X$ and $Y$ are affine. The theorem is then
Lemma~\pref{L:affine-projective-submodule-descent}.
\end{proof}
\end{section}


\begin{section}{Approximation of quasi-coherent sheaves}\label{S:approx}
Let $X$ be a quasi-compact and quasi-separated algebraic stack. We recall
that $X$ has the \emph{completeness
  property} if every quasi-coherent $\sO_X$-module is a directed colimit of
finitely presented $\sO_X$-modules and that $X$ has the \emph{partial
  completeness property} if every quasi-coherent $\sO_X$-module is the union
of its finitely generated quasi-coherent submodules. In the terminology
of~\cite[\S4]{rydh_noetherian-approx}, these two conditions are the conditions
(C1) and (C2) for the category $\QCoh(X)$ and they imply the corresponding
facts for quasi-coherent $\sO_X$-algebras.

We also make the following definition that
extends~\cite[Def.~4.7]{rydh_noetherian-approx}.

\begin{definition}\label{D:semi-noetherian}
An algebraic stack $X$ is \emph{semi-noetherian}
(resp.\ \emph{pseudo-noetherian}) if it is quasi-compact, quasi-separated and
$X'$ has the partial completeness property (resp.\ completeness property) for
every finitely presented morphism $X'\to X$ of algebraic stacks.
\end{definition}

Every pseudo-noetherian algebraic stack is semi-noetherian.
Noetherian algebraic stacks,
quasi-compact and quasi-separated schemes, algebraic spaces and
Deligne--Mumford stacks are examples of
pseudo-noetherian algebraic stacks~\cite[Thm.~A]{rydh_noetherian-approx}.


\begin{proposition}\label{P:semi-noeth:pure}
Let $\map{f}{X}{Y}$ be a faithfully flat and pure morphism of finite
presentation between
quasi-compact
and quasi-separated algebraic stacks. If $X$ has the partial completeness
property, then so has $Y$. In particular, $X$ is semi-noetherian if and only
if $Y$ is semi-noetherian.
\end{proposition}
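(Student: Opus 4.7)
The plan is to pull a sheaf on $Y$ back to $X$, decompose it using the partial completeness property there, and then push the pieces back down via the minimal-submodule construction of Theorem~\ref{T:existence-of-minimal}.

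More precisely, let $\sF\in\QCoh(Y)$ and set $\sG:=f^*\sF$. Since $X$ has the partial completeness property, $\sG$ is the directed union of its finitely generated quasi-coherent submodules $\sG_\lambda\subseteq \sG$. For each $\lambda$, Theorem~\ref{T:existence-of-minimal} (applicable because $f$ is flat of finite presentation and pure) gives a unique minimal quasi-coherent submodule $\sF_\lambda\subseteq \sF$ with $\sG_\lambda\subseteq f^*\sF_\lambda$, and $\sF_\lambda$ is of finite type because $\sG_\lambda$ is.

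Set $\sF':=\sum_\lambda \sF_\lambda\subseteq \sF$. Since $f^*$ is exact and preserves sums, $f^*\sF'\supseteq \sum_\lambda f^*\sF_\lambda\supseteq \sum_\lambda \sG_\lambda=\sG=f^*\sF$, so $f^*(\sF/\sF')=0$. By faithful flatness of $f$ this forces $\sF=\sF'$. Since the collection of finitely generated quasi-coherent submodules of $\sF$ is closed under finite sums, it is directed, and its union contains every $\sF_\lambda$, hence equals $\sF$. This establishes the partial completeness property for $Y$.

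For the ``in particular'' part: if $Y$ is semi-noetherian and $X'\to X$ is finitely presented, then $X'\to Y$ is also finitely presented, so $X'$ has the partial completeness property. Conversely, if $X$ is semi-noetherian and $Y'\to Y$ is a finitely presented morphism, then the base change $X\times_Y Y'\to Y'$ is again faithfully flat, pure and of finite presentation (purity and faithful flatness of finite presentation are stable under base change), and $X\times_Y Y'\to X$ is finitely presented, so $X\times_Y Y'$ has the partial completeness property; by the first part so does $Y'$. Both $X$ and $Y$ are quasi-compact and quasi-separated (qcqs is preserved by and descends along faithfully flat quasi-compact morphisms of finite presentation), so the equivalence of semi-noetherianity follows.

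There is no real obstacle beyond invoking the correct tools: Theorem~\ref{T:existence-of-minimal} does all the work of descending finitely generated subsheaves, and faithful flatness of $f$ (together with exactness of $f^*$) is precisely what is needed to conclude that the sum of the descended pieces fills out $\sF$. The only point to watch is that one cannot hope for a minimal $\sF_\lambda$ from mere flat descent (as the $\Z\to \Z\times\Q$ example preceding the theorem shows), which is why purity is essential.
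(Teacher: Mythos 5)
Your proof is correct and follows essentially the same route as the paper: the paper's own (terser) proof likewise applies Theorem~\ref{T:existence-of-minimal} to each finitely generated submodule of $f^*\sF$ and concludes by the faithful-flatness argument you spell out, which the paper records in the discussion opening the section on minimal subsheaves. Your handling of the ``in particular'' statement, including the stability of purity and faithful flatness under base change, also matches the intended argument.
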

\begin{proof}
Let $\sF\in\QCoh(Y)$ and write $f^*\sF$ as a union $\bigcup \sG_\lambda$
of quasi-coherent submodules of finite type.
By Theorem~\pref{T:existence-of-minimal}, for every $\lambda$ there exists
a minimal quasi-coherent
subsheaf $\sF_\lambda\subseteq \sF$ of finite type such that
$\sG_\lambda\subseteq f^*\sF_\lambda$. If we let
$\sF'=\bigcup \sF_\lambda\subseteq \sF$, then $f^*\sF'$ contains every
$\sG_\lambda$. It follows that $f^*\sF'=f^*\sF$ and thus $\sF'=\sF$ since $f$
is faithfully flat. We conclude that $Y$ has the partial completeness property.
\end{proof}

\begin{proposition}\label{P:semi-noeth:et}
Let $X$ be an algebraic stack and let $\map{p}{X'}{X}$
be \etale{}, representable, surjective and of finite presentation. Then
$X$ is semi-noetherian if and only if $X'$ is semi-noetherian.
\end{proposition}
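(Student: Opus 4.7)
I plan to handle the two directions separately. The forward implication is immediate: since $p$ is finitely presented and representable, $X'$ is quasi-compact and quasi-separated, and any finitely presented morphism $Y\to X'$ composes with $p$ to a finitely presented morphism $Y\to X$, which has the partial completeness property by the semi-noetherianity of $X$.

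For the converse, let $f\colon Y\to X$ be finitely presented; the goal is to show $Y$ has P1. The base change of $p$ along $f$ is a morphism $q\colon Y\times_X X'\to Y$ that is again \'etale, representable, surjective, and finitely presented, and whose source is finitely presented over $X'$ and thus has P1 by the semi-noetherianity of $X'$. The proposition therefore reduces to the descent statement: \emph{if $q\colon W'\to W$ is \'etale, representable, surjective, and of finite presentation between quasi-compact and quasi-separated algebraic stacks and $W'$ has P1, then so does $W$.}

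The natural tool for this descent is Proposition~\ref{P:semi-noeth:pure}, but $q$ need not be pure; for instance $\Spec V\sqcup \Spec V[1/t]\to \Spec V$ with $V$ a discrete valuation ring and uniformizer $t$ is \'etale, representable, surjective, and finitely presented but not pure, since the closure of the associated point in the component $\Spec V[1/t]$ fails to be faithfully flat over $\Spec V$. My plan is therefore to refine $q$ to a pure cover. By Zariski's Main Theorem (applicable since $q$ is representable, quasi-finite, separated, and finitely presented), $q$ factors as an open immersion $W'\hookrightarrow \bar W'$ followed by a finite morphism $\bar W'\to W$, which is surjective since $q$ is; and finite morphisms are proper, hence pure by~\ref{X:purity-first}.

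The main obstacle is that $\bar W'\to W$ need not be flat, so Proposition~\ref{P:semi-noeth:pure} does not apply to it directly. I would address this by a Raynaud--Gruson style flatification, producing a modification $\tilde W\to W$ after which the strict transform of $\bar W'$ is faithfully flat and finitely presented, giving a pure, faithfully flat, finitely presented cover of $\tilde W$ whose source retains P1 (inherited from $W'$ by tracking the construction). Proposition~\ref{P:semi-noeth:pure} then yields P1 for $\tilde W$, and a final transport across the proper surjective modification $\tilde W\to W$ would conclude the argument; this last step is the most delicate, since the tools from the paper apply most cleanly to pure faithfully flat morphisms and the modification is not flat, so some care is needed.
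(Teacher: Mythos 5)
Your setup is right: the easy forward direction, the base-change reduction to the single descent statement for $q\colon W'\to W$, and the observation that an \etale{}, representable, surjective morphism of finite presentation need not be pure (so that Proposition~\ref{P:semi-noeth:pure} cannot be invoked directly) all match the actual situation. But the route you propose for closing the descent statement has two genuine problems. First, it is circular: Zariski's Main Theorem for a morphism to a general quasi-compact and quasi-separated algebraic stack $W$ is proved in this paper as an \emph{application} of the main theorem --- constructing the finite hull $\bar W'\to W$ requires writing the integral closure of $\sO_W$ in $q_*\sO_{W'}$ as a filtered union of finite, finitely generated subalgebras, which is precisely the partial completeness property of $W$ that you are trying to establish. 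Second, even granting the factorization, the finite surjection $\bar W'\to W$ is not flat, and neither the flatification step nor the final ``transport across the proper non-flat modification $\tilde W\to W$'' is supported by anything in the paper: there is no descent of the partial completeness property along proper surjective morphisms that fail to be flat and pure, and it is also unclear why the strict transform would inherit the property from $W'$, since the partial completeness property does not obviously pass from an open substack to a larger stack. You correctly flag this last step as delicate; it is in fact the whole difficulty, and the proposal does not close it.

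The paper takes a different route: \etale{} \devissage{} (Theorem~D of the \etale{} \devissage{} paper) reduces the descent statement to the two cases where $q$ is either finite, \etale{} and surjective, or an \etale{} neighborhood of a closed substack. The first case is exactly where your purity idea does work --- a finite \etale{} surjection is faithfully flat, of finite presentation and pure (finite morphisms are proper, and proper morphisms are pure by~\pref{X:purity-examples}), so the minimal-subsheaf argument of Proposition~\ref{P:semi-noeth:pure} applies. The second case is handled by a gluing argument, namely the partial-completeness analogues of Lemmas~4.9 and~4.10 of the noetherian approximation paper. Replacing your ZMT-plus-flatification step by this \devissage{} is the missing ingredient.
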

\begin{proof}
This is proven exactly as~\cite[Prop.~4.11]{rydh_noetherian-approx}:
\etale{} \devissage{}~\cite[Thm.~D]{rydh_etale-devissage} is used to reduce the
question to where $p$ is either finite, surjective and \etale{} or an \etale{}
neighborhood. These two cases follow from simplified versions of \cite[Lem.~4.9
  and 4.10]{rydh_noetherian-approx} where ``completeness property'' is replaced
with ``partial completeness property''.
\end{proof}

The main theorem will follow from the previous two propositions together
with the following factorization result. For our main theorem we will only
apply it to a smooth and representable morphism (the presentation of a stack).

\begin{theorem}[{\cite[6.8]{laumon},\cite{romagny_components-in-families}}]\label{T:connected-factorization}
Let $\map{f}{X}{Y}$ be a faithfully flat morphism of finite
presentation with geometrically reduced fibers (e.g., $f$ smooth) between
algebraic stacks.
Then there exists an open substack $U\subseteq X$ and a factorization
$f|_U=h\circ g$ such that
\begin{enumerate}
\item $g$ and $h$ are faithfully flat of finite presentation;
\item $h$ is representable and \etale; and
\item $g$ has geometrically integral fibers.
\end{enumerate}
In particular, $g$ is pure. If $f$ is smooth, then $g$ is smooth and
we can take $U=X$.
\end{theorem}
\begin{proof}
First assume that $f$ is smooth. Consider the connected
factorization $X\to \pi_0(X/Y)\to Y$, which is described for morphisms of
schemes
in~\cite[6.8]{laumon} and for an algebraic stack over an algebraic space
in~\cite[Thm.~2.5.2]{romagny_components-in-families}. Since the construction
commutes with base change, it generalizes to our situation as well.
In this factorization
$\map{g}{X}{\pi_0(X/Y)}$ is smooth with geometrically connected fibers and
$\map{h}{\pi_0(X/Y)}{Y}$ is \etale{}, representable and of finite presentation,
but not necessarily
separated~\cite[Thm.~2.5.2 (i), (ii)]{romagny_components-in-families}.

In the general case we use the functor of irreducible components of
Romagny. The
\emph{unicomponent locus} $U\subseteq X$ is the subset of points that belong to
exactly one irreducible component of their fibers. It is open and quasi-compact
and there
is a factorization $U\to \Irr(X/Y)\to Y$ where the first morphism has
geometrically integral fibers and the second is surjective,
\etale{},
representable and of finite
presentation~\cite[Thm.~2.5.2 (i), (iii)]{romagny_components-in-families}.
\end{proof}

We now obtain the following equivalent form of the main theorem.

\begin{theorem}\label{T:main-theorem-variant}
Let $X$ be a quasi-compact and quasi-separated algebraic stack. Then $X$
is semi-noetherian.
\end{theorem}
\begin{proof}
Pick a smooth presentation $\Spec B\to X$. Theorem~\pref{T:connected-factorization} gives a factorization $\Spec B\to W\to X$ where $\Spec B\to W$ is smooth,
surjective and pure and $W\to X$ is \etale, surjective and of finite
presentation. The result now follows from
Propositions~\pref{P:semi-noeth:pure} and~\pref{P:semi-noeth:et}.
\end{proof}

\begin{remark}\label{R:conjectures}
To answer Conjectures~\tref{CONJ:completeness}
and~\tref{CONJ:approximation}, we may argue as in the proof of
Theorem~\pref{T:main-theorem-variant} using~\cite[Prop.~4.11
  and Lem.~7.9]{rydh_noetherian-approx}. This reduces the situation to where
$X$ has a smooth presentation $U\to X$ with geometrically connected fibers.
The author
hopes that the purity of $U\to X$ and its characterization as
homological projectivity can be used to settle the conjectures.
\end{remark}

\end{section}


\begin{section}{Applications}
We conclude with some applications of the main theorem.
\begin{theorem}[Zariski's main theorem]
Let $\map{f}{X}{Y}$ be a morphism between quasi-compact and quasi-separated
algebraic stacks. Then the following are equivalent:
\begin{enumerate}
\item $f$ is representable, separated and quasi-finite; and
\item there is a factorization $f=\overline{f}\circ j$ where $j$
  is a quasi-compact open immersion and $\overline{f}$ is finite.
\end{enumerate}
\end{theorem}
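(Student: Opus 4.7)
The direction (ii) $\implies$ (i) is immediate: finite morphisms and quasi-compact immersions are each representable, separated, and quasi-finite, and these three properties are stable under composition.

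For (i) $\implies$ (ii), the plan is to combine the classical Zariski's main theorem for algebraic spaces with the main theorem of the present paper. First, I would base-change $f$ to a smooth atlas $V=\Spec R\to Y$; the representability of $f$ makes $X_V\to V$ a separated quasi-finite morphism of algebraic spaces, so the classical Zariski's main theorem for algebraic spaces provides a canonical factorization $X_V\inj Z_V\to V$ with $X_V\inj Z_V$ a quasi-compact open immersion and $Z_V\to V$ affine and integral. Canonicity of this integral factorization lets it descend along the smooth atlas to a global factorization $X\inj Z\to Y$, with $Z=\Spec_Y(\mathcal{A})$ for some integral quasi-coherent $\sO_Y$-algebra~$\mathcal{A}$. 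I would then apply the main theorem of the paper to write $\mathcal{A}=\bigcup_\lambda\mathcal{A}_\lambda$ as a filtered union of finite-type quasi-coherent $\sO_Y$-sub-algebras (by taking $\sO_Y$-algebra closures of the finite-type submodules the main theorem produces). Since $\mathcal{A}$ is integral, each $\mathcal{A}_\lambda$ is finite-type and integral, hence finite, so $\overline{X}_\lambda:=\Spec_Y(\mathcal{A}_\lambda)\to Y$ is finite.

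The final step is to produce $\lambda$ for which $X\inj\overline{X}_\lambda$ is a quasi-compact immersion. Working over a smooth presentation $p\colon\Spec R\to Y$, we have $p^*Z=\Spec A$ for $A=p^*\mathcal{A}$ integral over $R$, and $p^*X=\bigcup_i D(g_i)$ in $\Spec A$ for finitely many $g_i\in A$. Since $X\to Y$ is of finite type, each localization $A_{g_i}$ is a finite-type $R$-algebra, whose generators may be chosen of the form $a_{ij}/g_i^{n_{ij}}$ with $a_{ij}\in A$. The $R$-subalgebra $A_0\subseteq A$ generated over $R$ by the $g_i$'s and $a_{ij}$'s is finite-type, and, being integral, is in fact finite; moreover $A_{g_i}=(A_0)_{g_i}$ by construction, so $p^*X\inj\Spec A_0$ is a quasi-compact open immersion and $\Spec A_0\to\Spec R$ is finite. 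Using quasi-compactness of $Y$ together with the filtration $\mathcal{A}=\bigcup_\lambda\mathcal{A}_\lambda$, I would choose a single $\lambda$ so that all the $g_i$'s and $a_{ij}$'s required over a finite smooth cover of $Y$ lie in $\mathcal{A}_\lambda$; then $X\inj\overline{X}_\lambda$ is the desired quasi-compact open immersion and $\overline{X}_\lambda\to Y$ is finite.

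The hard part will be the final step: extracting a \emph{finite} factorization from the \emph{integral} one provided by the classical theorem. This is precisely where the main theorem of the paper is essential --- it allows approximation of the integral $\mathcal{A}$ by its finite $\sO_Y$-subalgebras --- and the argument then combines this with the finite-type hypothesis on $X\to Y$ and the quasi-compactness of $Y$.
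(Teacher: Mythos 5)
Your proof is correct and follows essentially the same route as the paper, which simply cites the standard argument of Laumon--Moret-Bailly (Thm.~16.5) and \cite[Thm.~8.6]{rydh_noetherian-approx}: factor $X$ through the relative spectrum of the integral closure $\mathcal{A}$ of $\sO_Y$ in $f_*\sO_X$, write $\mathcal{A}$ as a filtered union of finite-type (hence finite) quasi-coherent subalgebras via the partial completeness property, and use finite presentation of $f$ plus quasi-compactness to land in a single finite subalgebra. You also correctly identify that only the \emph{partial} completeness property (unions of finite-type subsheaves) is needed, which is exactly the point the paper makes in its one-line proof.
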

\begin{proof}
This follows from~\cite[Thm.~16.5 (ii)]{laumon} and the main theorem (taking
into account that the finite presentation assumption of \loccit\ can be
avoided by replacing the reference to [EGA] IV 8.12.6 with [EGA] IV 18.12.13).
An essentially identical proof is given
in~\cite[Thm.~8.6 (ii)]{rydh_noetherian-approx} (use the
partial completeness property instead of the completeness property).
\end{proof}

\begin{proposition}\label{P:qcopen-has-fpclosed-complement}
Let $X$ be a quasi-compact and quasi-separated algebraic stack and let
$U\subseteq X$ be a quasi-compact open substack. Then there exists a
closed immersion $Z\inj X$ of finite presentation such that $U=X\smallsetminus
Z$.
\end{proposition}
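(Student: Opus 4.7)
The plan is to apply the main theorem (equivalently, the corollary that every quasi-compact and quasi-separated algebraic stack is semi-noetherian) to a quasi-coherent ideal cutting out the closed complement $|X|\setminus|U|$, and then to use the quasi-compactness of $U$ to extract a single finite type subideal that still defines this closed set.

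First I choose any quasi-coherent ideal $\mathcal{I}\subseteq\sO_X$ with $V(\mathcal{I})=|X|\setminus|U|$ as a subset of $|X|$; for instance, $\mathcal{I}$ may be taken to be the ideal of the reduced induced closed substack on $|X|\setminus|U|$, which exists on any quasi-separated algebraic stack. Since $V(\mathcal{I})$ is disjoint from $|U|$, the quotient $\sO_X/\mathcal{I}$ vanishes on $U$, giving $\mathcal{I}|_U=\sO_U$. By the corollary immediately preceding this proposition, $X$ is semi-noetherian, so $\mathcal{I}$ is the directed union of its quasi-coherent subideals $\{\mathcal{J}_\lambda\}_\lambda$ of finite type, and consequently $\bigcup_\lambda \mathcal{J}_\lambda|_U=\sO_U$.

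Since $U$ is quasi-compact, I pick a single smooth surjection $p\colon \Spec B\to U$. The pullbacks $p^*\mathcal{J}_\lambda$ correspond to ideals $J_\lambda\subseteq B$, and the equality $\bigcup_\lambda J_\lambda=B$ (which follows from $\bigcup_\lambda \mathcal{J}_\lambda|_U=\sO_U$) forces $1\in J_\lambda$ for some $\lambda$. Hence $J_\lambda=B$, which by faithful flatness of $p$ descends to $\mathcal{J}_\lambda|_U=\sO_U$, so $V(\mathcal{J}_\lambda)\cap|U|=\emptyset$. Combined with $V(\mathcal{J}_\lambda)\supseteq V(\mathcal{I})=|X|\setminus|U|$ (because $\mathcal{J}_\lambda\subseteq\mathcal{I}$), this gives $V(\mathcal{J}_\lambda)=|X|\setminus|U|$ as sets, so $Z:=V(\mathcal{J}_\lambda)$ has the required underlying set. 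Finally, $Z\inj X$ is of finite presentation because $\mathcal{J}_\lambda$ is a finite type quasi-coherent ideal.

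The real content is the main theorem, used via the semi-noetherian property of $X$; everything else is routine. The only mildly delicate points are producing the initial quasi-coherent ideal with the correct vanishing locus (handled by the reduced substack on quasi-separated stacks) and turning ``the directed union equals the unit ideal'' into ``some term of the union equals the unit ideal,'' which is exactly where quasi-compactness of $U$ intervenes through the existence of a single smooth affine atlas.
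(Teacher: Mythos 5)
Your proof is correct and follows essentially the same route as the paper: take the ideal of the reduced complement, write it as a directed union of finite type subideals using the main theorem, and use quasi-compactness of $U$ to find one subideal whose vanishing locus already equals $|X|\setminus|U|$. You merely spell out the quasi-compactness step (via an affine atlas of $U$ and the unit ideal) that the paper leaves implicit.
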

\begin{proof}
Let $I\subseteq \sO_X$ be the quasi-coherent sheaf of ideals defining
$Z_\red=(X\smallsetminus U)_\red$. Write $I=\bigcup I_\lambda$ as a union of
quasi-coherent ideals of finite type. If $Z_\lambda$ denotes the finitely
presented closed substack corresponding to $I_\lambda$, then $\cap
Z_\lambda=Z_\red$. Since $U$ is quasi-compact it follows that
$|Z_\lambda|=|Z_\red|$ for all sufficiently large $\lambda$. We may take
$Z=Z_\lambda$ for any such $\lambda$.
\end{proof}

As a third application, we have the existence of flattening stratifications
for finitely presented morphisms.

\begin{theorem}\label{T:flattening-stratification}
Let $X$ be a quasi-compact and quasi-separated algebraic stack and let $W\to X$
be a morphism of finite presentation. Then there exists a sequence of finitely
presented closed substacks $\emptyset=X_0\inj X_1\inj\dots \inj X_n$ such that
$|X_n|=|X|$ and the restriction of $W\to X$ to $X_k\smallsetminus X_{k-1}$ is
flat
for every $k=1,2,\dots,n$.
\end{theorem}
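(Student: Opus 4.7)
The plan is to peel off the non-flat locus one step at a time, realizing each step as a finitely presented closed substack via Proposition~\pref{P:qcopen-has-fpclosed-complement} (which rests on the main theorem), and to bound the number of iterations by noetherian approximation on an affine smooth atlas.

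As a preliminary, I would verify that the flat locus of any finitely presented morphism $W' \to Y$ between quasi-compact and quasi-separated algebraic stacks is a quasi-compact open substack of $Y$. Since flatness is smooth-local on the base, this reduces via a smooth atlas to the classical Raynaud--Gruson result that the flat locus of a finitely presented morphism of schemes is retrocompact open.

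Next, I would inductively define a descending sequence of finitely presented closed substacks $X = Y_0 \supseteq Y_1 \supseteq Y_2 \supseteq \dotsb$ as follows: given $Y_k$, the flat locus $U_k \subseteq Y_k$ of $W \times_X Y_k \to Y_k$ is a quasi-compact open substack, and by Proposition~\pref{P:qcopen-has-fpclosed-complement} there exists a finitely presented closed substack $Y_{k+1} \inj Y_k$ with $|Y_{k+1}| = |Y_k \setminus U_k|$. Then $W \to X$ restricted to each $Y_k \setminus Y_{k+1}$ is flat by construction.

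The main obstacle is to bound the length of this chain, and for this I would use noetherian approximation on an affine smooth atlas. Fix $p \colon \Spec A \to X$ smooth and surjective, and write $A$ as the filtered union of its finitely generated $\Z$-subalgebras $A_\lambda$, so that $\Spec A = \varprojlim_\lambda \Spec A_\lambda$. Since $W_A := W \times_X \Spec A \to \Spec A$ is of finite presentation, the standard limit formalism (\cite[\S8]{egaIV}) descends it to a finitely presented morphism $W_\lambda \to \Spec A_\lambda$ over the noetherian scheme $\Spec A_\lambda$ for some $\lambda$. The classical flattening stratification yields a finite chain $\Spec A_\lambda = T_0 \supset T_1 \supset \dotsb \supset T_n = \emptyset$ of finitely presented closed subschemes with $W_\lambda$ flat over each $T_k \setminus T_{k+1}$; set $T_k^A := T_k \times_{\Spec A_\lambda} \Spec A$. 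A straightforward induction on $k$, using at each stage that $W_A$ is flat over $T_k^A \setminus T_{k+1}^A$ and that $|Y_{k+1} \times_X \Spec A|$ is the non-flat locus of $W_A$ restricted to $Y_k \times_X \Spec A$ (flatness being smooth-local on the base), shows $|Y_k \times_X \Spec A| \subseteq |T_k^A|$. Hence $|Y_n \times_X \Spec A| = \emptyset$, and since $p$ is surjective the non-flat locus on $Y_{n-1}$ is empty, so one can take $Y_n = \emptyset$. Reindexing $X_k := Y_{n-k}$ gives the required stratification.
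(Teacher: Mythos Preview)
There is a genuine gap in the termination argument. Your induction $|Y_k \times_X \Spec A| \subseteq |T_k^A|$ compares only underlying sets, but flatness is sensitive to the scheme structure: from $|Y_k^A| \subseteq |T_k^A|$ and flatness of $W_A$ over $T_k^A \setminus T_{k+1}^A$ you cannot deduce that $W_A|_{Y_k^A}$ is flat away from $|T_{k+1}^A|$, because $Y_k^A$ may be scheme-theoretically thicker than $T_k^A$. Concretely, take $X = \Spec k[t]/(t^2)$ and $W = \Spec k$. The base flat locus $U_0$ is empty, and $Y_1 = Y_0$ is a legitimate output of Proposition~\pref{P:qcopen-has-fpclosed-complement}; your sequence then stalls at $Y_0 = Y_1 = Y_2 = \dotsb$, whereas the noetherian chain has $T_1 = X_{\red}$ and $T_2 = \emptyset$, so the claimed inclusion $|Y_2^A| \subseteq |T_2^A|$ fails outright. (A secondary issue: the flat locus \emph{in the target} of a finitely presented morphism need not be open---Raynaud--Gruson concerns the \emph{source}. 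For instance, if $Y = \mathbb{A}^2_k$ and $W = Y \sqcup \bigl(V(t) \setminus \{0\}\bigr)$ with the evident map to $Y$, then the set of $y$ such that $W$ is flat at every point of $W_y$ is $(Y \setminus V(t)) \cup \{0\}$, which is not open.)

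The missing ingredient is control of nilpotents. The paper does not take $X_n = X$; it first passes to a finitely presented thickening $X_\lambda$ of $X_{\red}$, obtained by writing the nilradical of $\sO_X$ as a filtered union of finite-type ideals (via the main theorem) and choosing $\lambda$ large enough that $p^{-1}(X_\lambda)$ already factors through the top atlas stratum $X'_n$. This step is exactly what aligns the scheme structures of the descending strata with those on the atlas, and it is what your purely topological comparison cannot supply.
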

\begin{proof}
The result is well-known when $X$ is noetherian: let $X_n=X_\red$; pick a
smooth presentation $\map{p}{\Spec(A)}{X_n}$; choose a non-empty open subscheme
$V\subseteq \Spec(A)$ over which $W$ is flat (generic flatness); let
$X_{n-1}=(X\smallsetminus p(V))_\red$. The result now follows by noetherian
induction.

If $X$ is affine, the result follows by standard limit methods: there is a
noetherian affine scheme $X_0$, a morphism $X\to X_0$ and a
morphism $W_0\to X_0$ of finite presentation that pull-backs to $W\to X$.
The pull-back of a solution
to the problem for $W_0\to X_0$ gives a solution for $W\to X$.

In the general case, we pick a smooth presentation $\map{p}{X'=\Spec(A)}{X}$ and
choose a filtration $X'_0\inj X'_1\inj\dots\inj X'_n$ that solves the problem
over~$X'$. We will prove that $X$ has a filtration of length $n$ that solves the
problem. Set-theoretically, we will have $|X_k|=X\smallsetminus
p(X'\smallsetminus X'_k)$.
If $n=0$, the problem is trivial. By induction on $n$, we may assume that there
exists a filtration of length $n-1$ on every closed substack $Q\inj X$ such
that $|p^{-1}(Q)|\subseteq |X'_{n-1}|$.

The subset $p(X'\smallsetminus X'_{n-1})$ is open and quasi-compact, hence
there is a finitely presented closed substack $Z\inj X$ such that
$X\smallsetminus Z=p(X'\smallsetminus X'_{n-1})$
(Proposition~\ref{P:qcopen-has-fpclosed-complement}).

Since $p$ is smooth, we have that $p^{-1}(X_\red)=X'_\red$ and hence
$p^{-1}(X_\red)\inj X'$ factors through $X'_n$. Writing the nilradical of
$\sO_X$ as a union of quasi-coherent ideals of finite type, we may write the
nil-immersion $X_\red\inj X$ as an intersection of finitely presented
nil-immersions $X_\lambda\inj X$. For sufficiently large $\lambda$, we have
that $p^{-1}(X_\lambda)\inj X'$ factors through $X'_n$. Then $W\to X$ is flat
over $X_\lambda\smallsetminus Z$ for such $\lambda$ since
$p^{-1}(X_\lambda)\smallsetminus X'_{n-1}\to X_\lambda\smallsetminus Z$ is
smooth and
surjective.

We let $X_n=X_\lambda$ and $Q=Z\cap X_\lambda$. Then, by induction there is a
filtration $X_0\inj X_1\inj\dots\inj X_{n-1}\inj Q$ with $|X_{n-1}|=|Q|$ such
that $W\to X$ is flat over the strata. The result follows.
\end{proof}

As a fourth application, we have the existence of stratifications into gerbes
for stacks with finitely presented inertia.

\begin{corollary}\label{C:stratification-into-gerbes}
Let $X$ be a quasi-compact and quasi-separated algebraic stack with inertia
of finite presentation. Then there exists a sequence of finitely
presented closed substacks $\emptyset=X_0\inj X_1\inj\dots \inj X_n$ such that
$|X_n|=|X|$ and $X_k\smallsetminus X_{k-1}$ is an fppf gerbe over an affine
scheme
for every
$k=1,2,\dots,n$.
\end{corollary}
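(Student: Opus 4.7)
The plan is to combine the flattening stratification theorem (Theorem~\ref{T:flattening-stratification}) applied to the inertia morphism with the classical characterization of fppf gerbes as algebraic stacks whose inertia is flat and of finite presentation. The entire argument amounts to recognizing that the flattening stratification of $I_X \to X$ \emph{is} a stratification into gerbes.

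First I would take $W = I_X \to X$ in Theorem~\ref{T:flattening-stratification}. This is legitimate: by hypothesis, $I_X \to X$ is a morphism of finite presentation of algebraic stacks, and $X$ is quasi-compact and quasi-separated. The theorem then yields a sequence of finitely presented closed substacks $\emptyset = X_0 \inj X_1 \inj \dots \inj X_n$ with $|X_n| = |X|$ such that $I_X \times_X U_k \to U_k$ is flat over each stratum $U_k := X_k \setminus X_{k-1}$.

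Next I would exploit that the formation of inertia commutes with arbitrary base change, which identifies $I_X \times_X U_k$ with $I_{U_k}$. Thus each $I_{U_k} \to U_k$ is representable, flat, and of finite presentation (finite presentation being inherited from $I_X \to X$, and representability being automatic for inertia of algebraic stacks). Finally I would invoke the standard criterion that a quasi-separated algebraic stack with flat, representable, locally finitely presented inertia is an fppf gerbe over its rigidification, concluding that each $U_k$ is an fppf gerbe.

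The main obstacle, if any, is the last step: quoting and correctly applying the characterization of fppf gerbes in this non-noetherian setting. The rest is essentially formal from Theorem~\ref{T:flattening-stratification} and base-change compatibility of inertia, so the substance of the corollary lies entirely in the availability of the flattening stratification, which is itself a downstream application of the main theorem of this paper.
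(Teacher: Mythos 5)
Your proposal is correct and matches the paper's proof, which is exactly the one-line argument "apply Theorem~\ref{T:flattening-stratification} to $I_X\to X$"; you have simply spelled out the implicit steps (base-change compatibility of inertia and the standard criterion that flat, locally finitely presented inertia makes a quasi-separated algebraic stack an fppf gerbe over its rigidification).
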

\begin{proof}
Apply Theorem~\pref{T:flattening-stratification} on $I_X\to X$ to obtain a
stratification into fppf gerbes over quasi-compact and quasi-separated
algebraic spaces. By Proposition~\pref{P:qcopen-has-fpclosed-complement},
it remains to prove that a quasi-compact and quasi-separated
algebraic space $S$ can be stratified into affine schemes. Pick an
approximation $S\to S_0\to \Spec \Z$, that is, an algebraic space $S_0$
of finite presentation over $\Spec \Z$ and an affine morphism
$S\to S_0$~\cite[Thm.~D]{rydh_noetherian-approx}.
It is enough to stratify $S_0$ into affine schemes. This can be done
by noetherian induction since $S_0$ has an open subspace that is a scheme.
\end{proof}

For a general quasi-compact and quasi-separated algebraic stack, the inertia is
only of finite type. In this case, it is not always possible to find finitely
presented stratifications as in
Corollary~\pref{C:stratification-into-gerbes}. In fact, sometimes even an
infinite number of strata is required~\cite[\spref{06RE}]{stacks-project}.

As a final application, we see that two different definitions of projectivity
and quasi-projectivity over algebraic stacks are equivalent. Our main
definition is analogous to that for schemes in EGA~\cite[D\'efs.\ 5.3.1 and
  5.5.2]{egaII}.
\begin{definition}
A representable morphism $\map{f}{X}{Y}$ of algebraic stacks is
\begin{enumerate}
\item \emph{quasi-projective} if $f$ is of finite type and there exists an
  $f$-ample invertible $\sO_X$-module; and
\item \emph{projective} if $X$ is $Y$-isomorphic to a closed substack
  of a projective bundle $\P_Y(\sE)$ where $\sE$ is a quasi-coherent
  $\sO_Y$-module of finite type.
\end{enumerate}
\end{definition}
Note that being $f$-ample is an fppf-local property on the
target~\cite[Cor.~2.7.2]{egaIV} and hence makes sense for representable
morphisms. Similarly, projective bundles is a local construction on the base.

\begin{theorem}[cf.\ {\cite[Prop.~5.3.2 \& Thm.~5.5.3]{egaII}}]
Let $Y$ be a quasi-compact and quasi-separated algebraic stack and let
$\map{f}{X}{Y}$ be a representable morphism. Then
\begin{enumerate}
\item\label{TI:quasi-proj}
  $f$ is quasi-projective if and only if there exists a quasi-compact
  immersion $X\inj \P_Y(\sE)$ over $Y$, where $\sE$ is a quasi-coherent
  $\sO_Y$-module of finite type.
\item\label{TI:proj}
  $f$ is projective if and only if it is proper and quasi-projective.
\end{enumerate}
\end{theorem}
\begin{proof}
If $\map{i}{X}{\P_Y(\sE)}$ is a quasi-compact immersion, then
$i^*\sO_{\P(\sE)}(1)$ is very ample and $f$ is quasi-projective. Conversely,
assume that $f$ is quasi-projective and let $\sL$ be an $f$-ample invertible
sheaf. There is a natural map $\map{\sigma}{f^*f_*\sL}{\sL}$ and when this map
is surjective, we have an induced morphism
$\map{r_{\sL,\sigma}}{X}{\P(f_*\sL)}$.  Choose a presentation
$\map{g}{Y'}{Y}$. After replacing $\sL$ with a sufficiently large power,
the invertible sheaf
$g'^*(\sL)$ becomes very ample which implies that $\sigma$ is surjective and
$r_{\sL,\sigma}$ is an immersion~\cite[Prop.~4.4.4]{egaII}. Write $f_*\sL$ as
the union of its finitely generated submodules $\sE_\lambda$. Then for
sufficiently large $\lambda$, the map
$\map{\sigma_\lambda}{f^*\sE_\lambda}{\sL}$ is surjective and the induced
morphism $\map{r_{\sL,\sigma_\lambda}}{X}{\P(\sE_\lambda)}$ is an
immersion~\cite[pf.\ of Prop.~4.4.1 (ii)]{egaII}.

If $f$ is projective, then $f$ is quasi-projective (as before) and proper
(check locally on $Y$). Conversely, if $f$ is quasi-projective and proper,
then by~\itemref{TI:quasi-proj}, there is an immersion $X\inj \P_Y(\sE)$
which is closed since $f$ is proper.
\end{proof}

\end{section}


\bibliography{noetherian-qaff}
\bibliographystyle{dary}

\end{document}